\documentclass[12pt]{article}
\usepackage{amsmath,amssymb,amsthm,graphicx,color,bbm}
\usepackage[left=1in,right=1in,top=1in,bottom=1in]{geometry}
\usepackage[british]{babel}
\usepackage{hyperref} 
\usepackage{enumitem}
\usepackage{csquotes,caption}
\usepackage{subcaption}

\usepackage[backend=bibtex,citestyle=numeric,doi=false,isbn=false,url=false,eprint=false,uniquename=full,maxbibnames=10,sortcites]{biblatex}

\DeclareNameFormat{always-init}{%
\ifnumequal{\value{uniquename}}{2}
    {\usebibmacro{name:family-given}
      {\namepartfamily}
      {\namepartgiven}
      {\namepartprefix}
      {\namepartsuffix}}
    {\usebibmacro{name:family-given}
      {\namepartfamily}
      {\namepartgiveni}
      {\namepartprefix}
      {\namepartsuffix}}%
  \usebibmacro{name:andothers}}

\DeclareNameAlias{author}{always-init}
\DeclareNameAlias{editor}{always-init}

\DeclareSortingNamekeyTemplate{
  \keypart{\namepart{family}}
  \keypart{\namepart{prefix}}
  \keypart{\namepart{given}}
  \keypart{\namepart{suffix}}}

\AtEveryBibitem{\clearfield{number}}
\AtEveryBibitem{\clearfield{series}}

\DeclareFieldFormat[article,inbook,incollection,inproceedings,patent,thesis,unpublished]{title}{{#1\isdot}}
  \renewbibmacro{in:}{%
  \ifentrytype{article}{}{\printtext{\bibstring{in}\intitlepunct}}}

\newtheorem{theorem}{Theorem}[section]
\newtheorem{lemma}[theorem]{Lemma}
\newtheorem{proposition}[theorem]{Proposition}

\theoremstyle{definition}
\newtheorem{definition}[theorem]{Definition}

\newtheorem{problem}[theorem]{Problem}

\theoremstyle{remark}
\newtheorem{remark}[theorem]{Remark}
\newtheorem{remarks}[theorem]{Remarks}

\numberwithin{equation}{section}

\let\phi=\varphi

\allowdisplaybreaks

\newcommand{\1}[1]{{\mathbbm{1}\mkern -1.5mu}{\{#1\}}}
\newcommand{\2}[1]{{\mathbbm{1}}_{#1}}

\newcommand{\R}{{\mathbb R}}

\newcommand{\Z}{{\mathbb Z}}
\newcommand{\N}{{\mathbb N}}

\newcommand{\ZP}{{\mathbb Z}_+}
\newcommand{\RP}{{\mathbb R}_+}

\newcommand{\geo}[1]{\mathrm{Geom}_0 \left( {#1} \right)}

\newcommand{\eps}{\varepsilon}

\newcommand{\re}{{\mathrm{e}}}
\newcommand{\ud}{{\mathrm d}}

\newcommand{\cF}{{\mathcal F}}

\newcommand{\cI}{{\mathcal I}}

\newcommand{\cV}{{\mathcal V}}
\newcommand{\cW}{{\mathcal W}}

\newcommand{\as}{\ \text{a.s.}}

\newcommand{\IP}{{\mathbb P}}
\newcommand{\IE}{{\mathbb E}}
\newcommand{\tIP}{\widetilde{\mathbb P}}
\newcommand{\0}{{\mathbf 0}}

\newcommand{\bbX}{{\mathbb X}}
\newcommand{\bbXB}{{\mathbb X}_{\mathrm{F}}}
\newcommand{\bbD}{{\mathbb D}}
\newcommand{\bbDB}{{\mathbb D}_{\mathrm{F}}}

\makeatletter
\def\namedlabel#1#2{\begingroup  
    (#2)%
    \def\@currentlabel{#2}%
    \phantomsection\label{#1}\endgroup
}
\makeatother

\newlist{myenumi}{enumerate}{10}
\setlist[myenumi]{leftmargin=0pt, labelindent=\parindent, listparindent=\parindent, labelwidth=0pt, itemindent=!, itemsep=1pt, parsep=4pt}

\newlist{thmenumi}{enumerate}{10}
\setlist[thmenumi]{leftmargin=0pt, labelindent=\parindent, listparindent=\parindent, labelwidth=0pt, itemindent=!}

\makeatletter 
\@mparswitchfalse%
\makeatother
\reversemarginpar
\usepackage[colorinlistoftodos]{todonotes}

\title{Dynamics of the leftmost  particle in heterogeneous 
semi-infinite exclusion systems}

\author{Mikhail Menshikov\footnote{Mikhail Vasilyevich Menshikov (January 17, 1948--May 1, 2026) passed away after the first version of this paper was submitted. He will be deeply missed by both co-authors.} \and Serguei Popov \and Andrew Wade}

\date{\today}

\begin{document}
\maketitle

\begin{abstract}
We study the behaviour of the leftmost particle 
in a  semi-infinite particle system on~$\Z$,
where each particle performs a continuous-time nearest-neighbour random walk,
with particle-specific jump rates, subject to the exclusion interaction (i.e., no more than one particle per site).
We give conditions, in terms of the jump rates of the system, under which the leftmost particle is recurrent or transient, and develop tools to study its rate of escape in the transient case, including by comparison with an $M/G/\infty$ queue. In particular we 
show examples in which 
the leftmost particle can be null recurrent, positive recurrent, ballistically transient, or subdiffusively transient. Finally we indicate the
role of the initial condition in determining the dynamics, and show, for example,
that sub-ballistic transience can occur
started from close-packed initial configurations but not from stationary initial conditions.
\end{abstract}

\medskip

\noindent
{\em Key words:}  
Exclusion process, infinite Jackson network, interacting particle system,  invariant measures, 
transience, null recurrence, rate of escape, subdiffusive, 
$M/G/\infty$ queue.

\medskip

\noindent
{\em AMS Subject Classification:} 60K35 (Primary), 60J27, 60K25, 90B22 (Secondary).

\section{Introduction}
\label{sec:intro}

\subsection{Dynamics of the leftmost particle}
 \label{sec:intro-dynamics}
 
Consider a 
 {semi-infinite}
collection of particles living on
distinct sites of~$\Z$, with the particles  enumerated by $\N :=\{1,2,3,\ldots\}$
 from left to right; in particular, there is a 
leftmost particle, but no rightmost one.
The particles perform continuous-time, nearest-neighbour random walks with exclusion interaction (i.e., there can be no more than one particle at a given site),
in which each particle possesses arbitrary finite positive jump rates. Since the jumps that would lead to  violation of the exclusion rule are suppressed, the  order of the particles is 
preserved by the dynamics. 

This model
is an example of the famous \emph{exclusion process},
and we studied basic properties of semi-infinite systems with non-homogeneous particle jump rates in
our earlier paper~\cite{MPW25}, to which the present paper is a sequel. In~\cite{MPW25} we established conditions for \emph{stability} of the system,
started from initial configurations that are finite perturbations of the close-packed configuration, in which there are
no empty sites between successive particles. In the stable situation, finite-dimensional inter-particle distances converge to product-geometric stationary distributions, and each particle in the system satisfies a strong law of large numbers with the same characteristic speed.  This paper is concerned with finer properties of the dynamics of the leftmost particle.

For $k \in \N$, we denote by~$a_k$ and $b_k$ the (attempted) jump rate to the left,  respectively, right of the $k$th particle. Throughout this paper, we assume that all the 
rates are strictly positive, and bounded from 
infinity by universal constants, 
i.e., that the following hypothesis,
 the intersection of Conditions~($\mathrm{A}_0$)
and~($\mathrm{A}_2$) of~\cite{MPW25}, is satisfied:

\begin{description}
\item\namedlabel{ass:rates}{$\text{A}$} 
There exists~$B\in(0,\infty)$
such that $0 < a_k \leq B$ and 
$0 < b_k \leq B$ for all~$k \in \N$.
\end{description}

The configuration space of the system is 
\begin{equation}
\label{eq:configuration-space}
\bbX  := \big\{ (x_1, x_2, \ldots ) \in \Z^{\N} : x_1 < x_2 < \cdots  \big\},  
\end{equation}
and the state of the process $X := (X(t))_{t \geq 0}$ at time~$t$ is
$X(t)=(X_1(t),X_2(t),X_3(t),\ldots)\in \bbX$
(that is, at time~$t$, $X_k(t)$ is the position of the $k$th particle). We will always assume that $X_1(0)=0$, which is no loss of generality for our questions of interest. 
Also define
\begin{equation}
\label{eq:eta-def}
\eta_k := (\eta_k (t))_{t \geq 0}, \text{ where } \eta_k (t) := X_{k+1}(t)-X_k(t)-1, 
\text{ for } k \in \N,
\end{equation}
  the number of  unoccupied sites 
between particles $k$ and $k+1$ at time~$t \in \RP$. 
The state of the system 
can thus be described by the position~$X_1$ of the leftmost particle  and by the process of inter-particle distances
$\eta := ( \eta_1,\eta_2,\ldots )$. 
Existence of a Markov process on $\bbX$ satisfying this informal description,
under the bounded rates hypothesis of
 Condition~\eqref{ass:rates}, is given by Proposition~1.6 of~\cite{MPW25}
  via a usual Harris graphical construction (see~\cite{FF94,andjel82,bmrs2017,ferrari92,bfl}),
  at least for all the initial conditions for $\eta(0)$ that we consider in~\cite{MPW25} and in this paper.

The process $\eta$ also has an interpretation
as an \emph{infinite Jackson network} of queues, as we describe in \S\ref{sec:intro-customer} below,
and this interpretation provides an associated \emph{customer random walk}
whose asymptotic behaviour is intimately linked with the dynamics of the particle
system, and the behaviour of the leftmost particle, in particular; one goal of this
paper is to explore further aspects of this connection, already partly investigated in~\cite{MPW25}.

A first step to understanding dynamics of the particle system is to investigate \emph{invariant measures} for the process $\eta$. Intuitively, if the system starts with $\eta$ in an invariant distribution, then the leftwards pressure felt by the leftmost particle due to the presence of the rest of the particle system is constant over large time-scales (since the fraction of time that the first inter-particle distance is equal to~$0$ is ergodic), which translates to a perturbation of the intrinsic speed of $X_1$ and hence a characteristic speed of the system as a whole. Since the configuration space $\bbX$ is uncountable, there may be many invariant measures, or none, depending on the $a_k, b_k$ (see below and~\cite{MPW25} for some examples). In the case of no invariant measures, the system is unstable, but one expects \emph{partial stability}, whereby the system can be decomposed into stable subsystems that barely interact. In the case of \emph{finite} systems, the partial stability picture was explored in~\cite{mmpw}. It turns out that central to describing these phenomena is investigating solutions $\rho := (\rho_k)_{k \in \ZP}$ to
the \emph{stable traffic equation} 
\begin{equation}
\label{eq:stable-traffic}
 (b_i +a_{i+1})\rho_i = a_i \rho_{i-1}+b_{i+1}\rho_{i+1}, \text{ for } i \in \N; ~ \rho_0 = 1,
\end{equation}
which is a 
linear system 
whose coefficients are the jump rate parameters 
$(a_i, b_i )_{i\in\N}$. 
(Throughout the paper, $\ZP := \{0\} \cup \N$.)

As discussed in~\S3 of~\cite{MPW25}, solutions to~\eqref{eq:stable-traffic}
form a one-parameter family $\rho = \rho(v) := \alpha + v \beta$, $v \in \R$,
where $\alpha := (\alpha_k)_{k \in \ZP}$
and $\beta := (\beta_k)_{k \in \ZP}$ are defined by 
\begin{align}
\label{eq:alpha-k-def}
 \alpha_0 & := 1, ~\text{and}~ \alpha_k := \frac{a_1\cdots a_k}{b_1\cdots b_k}  \text{ for } k \in \N; \\
\label{eq:beta-k-def}
\beta_0 & := 0 ,  ~\text{and}~ \beta_k := \frac{1}{b_k}+\frac{a_k}{b_k b_{k-1}}+
  \cdots + \frac{a_k\cdots a_2}{b_k\cdots b_1}
  \text{ for }k\in \N.
\end{align}
Solutions $\rho$ to~\eqref{eq:stable-traffic} are \emph{admissible}
if $\rho_k \in (0,1)$ for all $k \in\N$, and each admissible solution\footnote{
There may be many admissible solutions, unlike in the case of finite systems, where
there is another boundary condition which makes the solution of~\eqref{eq:stable-traffic} unique: see~\cite{mmpw} for the finite case.}
corresponds to a product-geometric stationary distribution $\nu_\rho$ (we give a precise
definition of~$\nu_\rho$ in terms of $\rho$ in~\S\ref{sec:upper-bound} below). 
If the process is started from a configuration with $\eta (0) \sim\nu_\rho$ 
for an admissible~$\rho = \rho(v)$, then $v$ is the stationary \emph{speed}  
of the process (i.e., for each fixed~$k$, $X_k(t)/t \to v$, a.s.; see Proposition 1.6 of~\cite{MPW25}). Among admissible solutions (if there are any),
distinguished is the \emph{minimal} solution $\rho(v_0) = \alpha + v_0 \beta$ where 
(see Definition~1.8 and Proposition~1.7 of~\cite{MPW25}) 
\begin{equation}
\label{eq:v0-def}
v_0 := - \lim_{k\to\infty} \frac{\alpha_k}{\beta_k}
 = -\Big(\frac{1}{a_1}+\frac{b_1}{a_1a_2}
  + \frac{b_1b_2}{a_1a_2a_3}+ \cdots\Big)^{-1} \in (-a_1,0].
\end{equation}
Then (i) $v_0 \leq v$ for every $v$ for which $\rho(v)$ is admissible, and (ii) $\nu_{\rho (v_0)}$ maximizes the probability, among all $\nu_{\rho(v)}$ for which $\rho(v)$ is admissible, of any particular finite collection of inter-particle distances all being~$0$. Roughly speaking, the minimal solution corresponds to the most densely packed stable configuration, hence the one with the greatest leftward pressure on the leftmost particle, and hence the most negative characteristic speed. It is also true that there are situations when no 
admissible solutions exist; as mentioned in Remark~1.5 
of~\cite{MPW25}, this usually means that the system 
can be decomposed into several ``stable subclouds'' which
do not interact with each other after some (random)
time. In any case, in this paper we will usually assume 
that at least one admissible solution \emph{does} exist (see Remarks~\ref{rems:known-finite}\ref{rems:known-finite-i} below for one way to verify that).

In the present paper, we will mainly
(but not in \S\ref{sec:stationarity-start} below) assume that 
the system starts from 
a configuration that is ``approximately close-packed''. Define
\begin{equation}
\label{eq:XB-def}
\bbXB := \big\{ x \in \bbX : x_{k+1} - x_k = 1 \text{ for all but finitely many } k \in \N \big\};
\end{equation}
we refer to $\bbXB$ as the set of \emph{finite configurations}, because there are only finitely many empty sites 
between particles.
An important observation is that, if the process starts from a finite initial 
configuration, then, almost surely, it will be still 
in~$\bbXB$ at any time~$t>0$.
The following summarizes the basic results of~\cite{MPW25} in that case.

\begin{proposition}
\label{prop:known-finite} 
    Suppose that Condition~\eqref{ass:rates} holds, and that there exists at least one admissible solution $\rho$ to~\eqref{eq:stable-traffic}. Take  $X(0) \in \bbXB$ with $X_1 (0) =0$. 
    Then, with $v_0$ given by~\eqref{eq:v0-def}, 
\begin{equation}
\label{eq:v_0_speed}
 \lim_{t\to\infty}\frac{X_k(t)}{t}=v_0, \text{ for every $k \in \N$}, \as
\end{equation}
For $\rho = \rho(v_0)$ the minimal solution to~\eqref{eq:stable-traffic}, we have that, for every finite $A \subset \N$,
\begin{equation}
\label{eq:local_convergence}
\lim_{t \to \infty} \IP \Big[\bigcap_{k \in A} \{ \eta_k (t) = u_k \} \Big] = \prod_{k \in A} (1-\rho_k) \rho_k^{u_k},
\text{ for every } u_k \in \ZP,\, k \in A.
\end{equation}
    Moreover, if $v_0 <0$ then the minimal solution is the only admissible solution to~\eqref{eq:stable-traffic}.
\end{proposition}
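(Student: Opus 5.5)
This proposition collects results of~\cite{MPW25}, so the plan is mostly to identify which results there give each claim and check that the hypotheses match. Structurally the argument has three parts: a speed statement, a local convergence statement, and a uniqueness statement.

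\textbf{Uniqueness of the admissible solution.} I would start here, because it is purely algebraic. Every solution has the form $\rho(v)=\alpha+v\beta$ with $\beta_k>0$, and by~\eqref{eq:v0-def} we have $\alpha_k/\beta_k\to -v_0$.
\begin{itemize}
\item Suppose $v>v_0$ and $v_0<0$. Then $\rho_k(v)=\beta_k\,(\alpha_k/\beta_k+v)$, and the factor $\alpha_k/\beta_k+v$ tends to $v-v_0>0$.
\item Since $v_0<0$, the series in~\eqref{eq:v0-def} converges.
\item Comparing with $\beta_k$ by factoring out $a_1\cdots a_k/(b_1\cdots b_k)$ should show that $\beta_k\to\infty$ in this case, via $\beta_k=\alpha_k\sum_{j\le k} b_1\cdots b_{j-1}/(a_1\cdots a_j)\cdot(\text{stuff})$. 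Hence $\rho_k(v)$ eventually exceeds $1$, contradicting admissibility.
\end{itemize}
The delicate point is the asymptotics of $\beta_k$ when $\alpha_k$ might tend to $0$. I expect to use the identity $\alpha_k/\beta_k = \bigl(\sum_{j=1}^k b_1\cdots b_{j-1}/(a_1\cdots a_j)\bigr)^{-1}$ together with the boundedness in Condition~\eqref{ass:rates}.

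\textbf{Local convergence~\eqref{eq:local_convergence}.} I would use the Jackson network representation of $\eta$ together with monotone coupling.
\begin{itemize}
\item The close-packed start $\eta\equiv0$ is the maximal-pressure configuration.
\item Attractiveness of the Harris construction lets us compare truncated finite systems of the first $n$ particles, for which~\cite{mmpw} gives convergence to product-geometric laws.
\item Letting $n\to\infty$, the stationary parameters of the truncated systems converge monotonically to the minimal solution $\rho(v_0)$. This is where minimality enters: the limit is the most densely packed admissible solution.
\item Passing from the close-packed start to a general finite configuration in $\bbXB$ uses the fact that only finitely many gaps differ, and these are absorbed over time by coupling.
\end{itemize}

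\textbf{Speed~\eqref{eq:v_0_speed}.} Given the local convergence, the speed claim follows from the decomposition
\[
X_1(t)=M(t)+\int_0^t\bigl(b_1\1{\eta_1(s)>0}-a_1\bigr)\,ds,
\]
where $M$ is a martingale with bounded jumps, so $M(t)/t\to0$ almost surely.
\begin{itemize}
\item The time-average of $\1{\eta_1(s)=0}$ must converge almost surely to $\rho_1$-dependent frequencies, namely $1-\rho_1$.
\item The drift then equals $b_1\rho_1-a_1$. By the first traffic equation this is $v_0$, since $\rho_1=\alpha_1+v_0\beta_1=(a_1+v_0)/b_1$.
\item Upgrading convergence in law to an almost-sure ergodic average is the main obstacle. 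I would handle it with upper and lower monotone couplings between the stationary process started from $\nu_{\rho(v_0)}$ and truncated systems.
\item Finally, $X_k-X_1=k-1+\sum_{j<k}\eta_j$ is tight, so the same speed $v_0$ holds for every $k$.
\end{itemize}
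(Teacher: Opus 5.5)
Your uniqueness argument is sound, and the identity you point to closes it at once. One has $\beta_k=\alpha_k f(k)$, with $f$ the scale function \eqref{eq:scale_func-def}. If $v_0<0$ then $f(\infty)=1/|v_0|<\infty$, so its terms $1/(a_j\alpha_{j-1})$ tend to $0$. Since $a_j\le B$, this forces $\alpha_k\to\infty$, hence $\beta_k\ge\alpha_k/a_1\to\infty$ and $\rho_k(v)\to\infty$ for every $v>v_0$. So there is no delicate case with $\alpha_k\to0$. This is also a quicker proof of $\limsup_k\beta_k=\infty$ than the paper's case analysis; apart from that, the paper proves the proposition by citing Proposition~3.1(iv), Theorem~1.14 and Theorem~1.9 of~\cite{MPW25}.

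The local-convergence and speed parts have a genuine gap. The free system of the first $n$ particles is the network $\eta$ with queue $n$ holding infinitely many customers. So it dominates $\eta$ from \emph{above}, the same side as $\nu_{\rho(v_0)}$ in Lemma~\ref{lem:queue-monotonicity}. Its traffic solution has $\rho_n=1$, i.e.\ $v^{(n)}=(1-\alpha_n)/\beta_n$; this need not be admissible, and tends to $v_0$ only if $\beta_n\to\infty$. For example, take $a_k\equiv1$, $b_k\equiv\re$. Then $\alpha_k=\re^{-k}$, $v_0=0$ and $\beta_k=(1-\re^{-k})/(\re-1)$, so $v^{(n)}=\re-1$ and $\rho^{(n)}\equiv1$ for every $n$. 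Each truncated system is then critical (for $n=2$ the gap is a symmetric reflected walk), so \cite{mmpw} gives no product-geometric limit, yet $\eta$ converges to $\nu_\alpha$. When $\beta_n\to\infty$, the truncations merely reproduce the upper bound $\nu_{\rho(v_0)}$.

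What is missing is any \emph{lower} comparison, and that is the real content. You need $\liminf_t\IP[\eta_k(t)\ge m]\ge\rho_k(v_0)^m$ and $\liminf_t t^{-1}\int_0^t\1{\eta_1(s)>0}\ud s\ge\rho_1(v_0)$ a.s., i.e.\ $\liminf_t X_1(t)/t\ge v_0$. Since $\0$ is the minimal configuration, no other initial condition of the same system can supply this, and your ``upper and lower couplings'' are both upper.

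One device that works is to demote each customer to low priority the first time it reaches queue~$n$. The high-priority customers then form a finite Jackson network on queues $1,\dots,n-1$ with a sink at~$n$, dominated by $\eta$ queue by queue. Its traffic solution now has $\rho_n=0$, namely $\alpha-(\alpha_n/\beta_n)\beta$. This increases to $\rho(v_0)$ precisely because $\alpha_n/\beta_n\downarrow|v_0|$, which is where minimality genuinely enters. Sandwiching $\eta$ between these positive recurrent networks and the stationary process gives \eqref{eq:local_convergence} from $\eta(0)=\0$. The ergodic theorem for these networks gives $\liminf_t X_1(t)/t\ge v_0$ a.s. The reverse bound is trivial for $v_0=0$, since $X_1(t)\le\|\eta(0)\|$, and is \eqref{eq:ballistic_left} for $v_0<0$.

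Finally, tightness of $X_k-X_1$ gives only convergence in probability. For the a.s.\ statement, use domination by the stationary process together with Borel--Cantelli on the geometric tails.
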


 We give a short proof of
Proposition~\ref{prop:known-finite} at the end of this introduction (\S\ref{sec:intro-customer}), indicating
how it is extracted from results of~\cite{MPW25}.

\begin{remarks}
\label{rems:known-finite} 
     \begin{myenumi}[label=(\roman*)]
     \item
     \label{rems:known-finite-i} 
     It is not hard to show (see (3.8) of~\cite{MPW25})
     that $\alpha_k/\beta_k$ is strictly decreasing in~$k$, and $0 \leq |v_0| < \alpha_k / \beta_k$,
     so that $\alpha_k +v_0 \beta_k > 0$ for all $k \in \ZP$. Since there exists an admissible $\rho$ if and only if $\rho(v_0)$ is admissible (see Proposition 1.7 of~\cite{MPW25}), and $v_0 \leq 0$, this means that to check that there exists an admissible $\rho$ it suffices to check that $\alpha_k < 1$ for all $k \in \N$, which turns out be convenient to check for our examples in this paper.
   \item
     \label{rems:known-finite-ii}
      The intuition for Proposition~\ref{prop:known-finite} is that, started from configurations with $X(0) \in \bbXB$ 
(which, by definition, are densely packed), among any admissible $\rho(v)$,
only $\rho (v_0)$ is ``accessible'' because, in light of the final statement
in Proposition~\ref{prop:known-finite}, any
non-minimal solution~$\rho(v)$ must have $v > v_0 = 0$, and positive speed
is impossible to achieve 
due to blocking by the eventually tightly-packed configuration to the right. The intuition
behind the fact that when $v_0 < 0$ there is only one admissible solution is that in this case
stability is manifest in the neighbourhood of the leftmost particle with particles travelling to the left, so the initial density of particles to the right is unable to influence the limit, although it will impact the speed of convergence. Indeed, we expect that when $v_0 <0$ the conclusion of
Proposition~\ref{prop:known-finite} remains valid for \emph{any} initial configuration, although this is not proved in~\cite{MPW25}. Finally, we note that much of this intuition can be expected to fail for general \emph{unbounded} rates where ``explosion'' phenomena appear possible; to our knowledge, that setting is  largely unexplored.
\end{myenumi}
\end{remarks}

One of the aims of the present paper is to study more closely the behaviour of the \emph{leftmost particle}, on a finer scale than the law of large numbers given in~\eqref{eq:v_0_speed}. While we do not have a complete picture, as we discuss in more detail below, we show that a rich variety of behaviours are possible, and we develop tools for classifying those behaviours. In this introduction, we state one result that shows the richness of the picture for a class of
rates $a_k,b_k$ that are asymptotically small perturbations of the homogeneous symmetric case where $a_k \equiv b_k$ are constant. By analogy with the classical near-critical phenomena for one-dimensional random walks explored in~\cite{lamp1,lamp2}, we call this class of parameters \emph{Lamperti-type} rates.

\begin{theorem}
    \label{thm:lamperti}
    Suppose that $0 < \mu < 1/2$, and take 
    \begin{equation}
        \label{eq:a-b-lamperti}
        a_k = \frac{1}{2} - \frac{\mu}{k} , ~ b_k = \frac{1}{2} + \frac{\mu}{k} , \text{ for all } k \in \N;
    \end{equation}
    in particular, Condition~\eqref{ass:rates} holds. 
          Take  $X(0) \in \bbXB$ with $X_1 (0) =0$. 
    Then, in addition to Proposition~\ref{prop:known-finite}, the leftmost particle process $X_1$ has the following behaviour.
 \begin{thmenumi}[label=(\alph*)]
    \item
        \label{thm:lamperti-a}
    If $0 <  \mu  < 1/4$, then $X_1$ is transient to $-\infty$ at polynomial rate, specifically,
    there exist constants $c_1, c_2$ (depending on $\mu$) with $0 < c_1 < c_2 < \infty$ and, a.s.,
    for all $t$ sufficiently large, 
    \begin{equation}
        \label{eq:poly-transient} 
 c_1 t^{\frac{1}{2} - 2\mu} < -  X_1 (t) < c_2  (t \log t)^{\frac{1}{2} - 2\mu} .
    \end{equation}
      \item 
             \label{thm:lamperti-b}
     If $ \mu > 1/4$, then   $X = (X_1, \eta)$ is positive recurrent, so, in particular, $X_1$ is ergodic.
    \end{thmenumi}
\end{theorem}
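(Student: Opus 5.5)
The plan is to reduce everything to the process of \emph{holes}, i.e.\ empty sites between consecutive particles, viewed through the Jackson-network picture of~$\eta$. First compute the data in~\eqref{eq:alpha-k-def}--\eqref{eq:v0-def} for the rates~\eqref{eq:a-b-lamperti}. Since $a_i/b_i = (i-2\mu)/(i+2\mu) = 1 - 4\mu/i + O(i^{-2})$, we get $\alpha_k \asymp k^{-4\mu}$, and in particular $\alpha_k<1$. Also $\beta_k \asymp \sum_{j\le k} (j/k)^{4\mu} \asymp k$, so $v_0 = -\lim \alpha_k/\beta_k = 0$, and the minimal admissible solution is $\rho=\alpha$ (admissibility by Remarks~\ref{rems:known-finite}\ref{rems:known-finite-i}).

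\emph{Conservation law.} Start from $X(0)\in\bbXB$ and fix $K$ beyond the last hole. Up to the first time a hole reaches gap $K$, we have $X_K(t)=X_K(0)$, and hence
\[
X_1(t) = X_K(0)-(K-1)-\sum_{j<K}\eta_j(t).
\]
Letting $K\to\infty$, which is justified because holes move at bounded rates, gives $-X_1(t) = N(t) - N(0)$, where $N(t):=\sum_k \eta_k(t)$ is the total number of holes. Holes enter at gap~$1$ as a Poisson stream of rate $a_1$ (leftward jumps of particle~$1$, which are never blocked). They leave from gap~$1$ when particle~$1$ jumps right. A hole in gap~$k$ moves to gap $k+1$ at rate $a_{k+1}$ and to gap $k-1$ at rate $b_k$, but only one hole per gap moves at a time: these are single-server queues. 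A lone hole therefore performs a Lamperti walk with drift $\approx -2\mu/k$ and unit second moment. Equivalently it is a discrete Bessel-type process of dimension $\delta = 1-4\mu<1$, so it is recurrent and is absorbed at $0$. Its sojourn time $T$ satisfies $\IP(T>s)\asymp s^{-(1/2+2\mu)}$.

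\emph{Part~\ref{thm:lamperti-b}.} By the conservation law, on the reachable state space $X$ is a function of $\eta$, and the state space is countable (finitely many holes). One checks detailed balance for the measure $\pi(\eta)=\prod_k \alpha_k^{\eta_k}$: a leftward jump of particle $k$ changes $(\eta_{k-1},\eta_k)$ by $(-1,+1)$ at rate $a_k$, the reverse jump has rate $b_k$, and $\alpha_k/\alpha_{k-1}=a_k/b_k$. The entrance boundary at particle~$1$ is consistent because $\alpha_0=1$. Now $\sum_\eta \pi(\eta)=\prod_k (1-\alpha_k)^{-1}<\infty$ if and only if $\sum_k\alpha_k<\infty$, i.e.\ if and only if $4\mu>1$. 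Irreducibility on the reachable set then gives positive recurrence.

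\emph{Part~\ref{thm:lamperti-a}, lower bound.} Couple the Jackson network with an $M/G/\infty$ system in which every hole moves freely as the lone-hole walk. Queueing only delays customers, because each hole's path is a time change of a free path that is slowed down. So $N(t)$ stochastically dominates the number $M(t)$ in an $M/G/\infty$ queue with arrival rate $a_1$ and service time~$T$. The random variable $M(t)$ is Poisson with mean
\[
a_1\int_0^t \IP(T>s)\,\ud s \asymp t^{1/2-2\mu}.
\]
Poisson concentration along $t_n=2^n$, the Borel--Cantelli lemma, and interpolation between the $t_n$ then yield the a.s.\ lower bound $c_1 t^{1/2-2\mu}$. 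The interpolation works because $N$ changes by at most a Poisson number of units on $[t_n,t_{n+1}]$.

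\emph{Part~\ref{thm:lamperti-a}, upper bound (main obstacle).} Here the interaction works against us. I would use attractiveness/monotonicity: started from close packing, $\eta(t)$ should be stochastically dominated by the product-geometric measure $\nu_\alpha$ for every $t$. No hole can travel further than $K(t)=C\sqrt{t\log t}$ by time~$t$, except on an event whose probability is summable along $t_n$, by a large-deviation bound for bounded-rate walks. It follows that
\[
N(t)\le \sum_{k\le K(t)}\eta_k(t) \preceq \sum_{k\le K(t)} \mathrm{Geom}(\alpha_k),
\]
whose mean is $\asymp K(t)^{1-4\mu}\asymp (t\log t)^{1/2-2\mu}$. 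This sum has independent summands, so exponential concentration, the Borel--Cantelli lemma along $t_n$, and the same interpolation give the upper bound in~\eqref{eq:poly-transient}. The delicate points are proving the stochastic domination of $\eta(t)$ by $\nu_\alpha$ uniformly in~$t$ (via the Harris coupling), and controlling holes that escape beyond $K(t)$. If the domination fails, the fallback is to bound $\IE\,\eta_k(t)$ through the Jackson-network traffic equations.
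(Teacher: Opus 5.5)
Your strategy is the paper's for part~(a). The lower bound uses the $M/G/\infty$ domination of Lemma~\ref{lem:MGinfty-dominance}. The upper bound follows the scheme of Theorem~\ref{thm:upper_growth}: domination by $\nu_\alpha$, confinement of customers to $[1,C\sqrt{t\log t}]$, and exponential moments of independent geometrics.

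\textbf{The step that fails.} Your passage from fixed-time estimates to ``for all large $t$'' does not work as written. On $[2^n,2^{n+1}]$ the leftmost particle makes of order $2^n$ jumps, since leftward jumps occur at rate $a_1$ and are never blocked. Bounding the change of $N$ there by a Poisson count therefore gives an error of order~$t$, which swamps the scale $t^{1/2-2\mu}$ in both the lower and the upper bound.

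\textbf{How to repair it.} Use a grid with $t_{n+1}-t_n\le\eps t_n^{1/2-2\mu}$. It has only polynomially many points up to time~$T$, so the bounds $\re^{-ct^{1/2-2\mu}}$ (Poisson concentration), $\re^{-cK(t)^{1-4\mu}}$ (geometric sums) and $t\cdot t^{-C_2}$ (escape beyond $K(t)$, with $C$ and hence $C_2$ large) all remain summable. For the lower bound alone you may instead keep $t_n=2^n$ and count only $M/G/\infty$ customers that arrive before $t_n$ and are still present at $2t_n$. Their number is Poisson with mean $a_1\int_{t_n}^{2t_n}\IP[\tau>s]\,\ud s\ge c\,t_n^{1/2-2\mu}$, and it bounds $M$ from below on all of $[t_n,2t_n]$.

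The paper avoids grids altogether. It cites~\cite{P25} for the lower bound. For the upper bound it integrates the fixed-time estimate over~$t$, getting a bad set of finite expected Lebesgue measure. It then uses $h'\le a_1$ and the positive chance that $X_1$ stays put for time $a_1^{-1}$ to show the bad set is bounded.

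\textbf{Smaller points.}
\begin{itemize}
\item The domination by $\nu_\alpha$ that you call delicate is the routine second-class-customer coupling of Lemma~\ref{lem:queue-monotonicity}, so no fallback is needed. It holds only from $\eta(0)=\0$, however. For general $u\in\bbDB$ you must first reduce via $-X_1^u(t)=\|\eta^u(t)\|-\|u\|\le\|\eta^\0(t)\|$, which holds because second-class customers only leave.
\item Confinement to $[1,C\sqrt{t\log t}]$ needs the inward drift of $\zeta$, via comparison with a symmetric walk, not merely bounded rates.
\item For the lower bound you only need $\IP[\tau\ge t]\ge C t^{-(1+4\mu)/2}$. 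The paper proves this from the scale function $f(x)\asymp x^{1+4\mu}$ plus a survival estimate started from $h\sqrt t$. That argument should replace your Bessel-dimension heuristic.
\item For~(b) the paper simply invokes Theorem~1.12 of~\cite{MPW25} through Proposition~\ref{prop:walk-summary-known}\ref{prop:walk-summary-known-b}. Your reversibility argument with $\prod_k(1-\alpha_k)^{-1}<\infty$ is a valid self-contained alternative once you add non-explosion. Non-explosion is immediate: the number of customers is at most $\|\eta(0)\|$ plus a Poisson$(a_1t)$ variable. It is needed because, in continuous time, an invariant probability measure for an irreducible chain yields positive recurrence only when the chain does not explode.
\end{itemize}
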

\begin{remarks}
\label{rems:lamperti}
     \begin{myenumi}[label=(\roman*)]
          \item
     \label{rems:lamperti-i}
     The critical case $\mu = 1/4$ is not covered in Theorem~\ref{thm:lamperti}, presents some subtleties, and  is unresolved: see Remark~\ref{rem:lamperti-critical} below, after the proof of Theorem~\ref{thm:lamperti}.
     \item
     \label{rems:lamperti-ii}
     The asymptotic speed~$v_0$ in~\eqref{eq:v_0_speed}, given by formula~\eqref{eq:v0-def},  satisfies $v_0=0$ in both parts~\ref{thm:lamperti-a} and~\ref{thm:lamperti-b}, where the stated results give finer information. 
        \item
     \label{rems:lamperti-iii}
     In part~\ref{thm:lamperti-b}, to say $X_1$ is ergodic means that, for every $A \subseteq \Z$,
     \begin{equation}
         \label{eq:X1-ergodic}
     \lim_{t \to \infty}    \frac{1}{t} \int_0^t \1 { X_1 (s) \in A } \ud s = \pi (A) , \as, 
     \end{equation}
     where the probability measure $\pi$ on $\Z$ is expressed in terms of $\nu_\alpha$ and $\eta(0)$ at~\eqref{eq:pi-def} below.
\item 
     \label{rems:lamperti-iv}
Since $0 < \mu < 1/4$ in part~\ref{thm:lamperti-a}, the exponent $\frac{1}{2}-2\mu$ in~\eqref{eq:poly-transient} can take any value in $(0,\frac{1}{2})$, i.e., the transience is \emph{subdiffusive}; this contrasts with the symmetric case where $\mu=0$, see remark~\ref{rems:lamperti-viii} below. It would be of interest to find examples where transience is polynomially \emph{superdiffusive} but sub-ballistic, i.e., exponent in $(\frac{1}{2},1)$ (see Problem~\ref{problem:superdiffusive} below). It is tempting to speculate that taking $-1/4 < \mu <0$ would achieve this, but that turns out to be false, transience being ballistic in that case: see remark~\ref{rems:lamperti-vii} below.
\item 
\label{rems:lamperti-v}
 The transience of $X_1$ in part~\ref{thm:lamperti-a}, started from $\eta(0) \in \bbXB$,
 should be contrasted with the fact that, for precisely the same rates, if we start from $\eta(0)$ drawn from the stationary $\nu_\rho$ corresponding to the minimal $\rho = \rho (v_0 ) = \alpha$,
 then $X_1$ is \emph{recurrent}, as shown in Theorem~\ref{thm:rec_stationary} below (see Remark~\ref{rem:importance-of-initial-condition}). Such examples show that the dynamics can depend crucially on the initial configuration, even in cases where there is a unique invariant measure.
\item    
\label{rems:lamperti-vi}
One of the main themes of this paper is to show a deeper interplay between the behaviour of the leftmost particle in the semi-infinite particle system and the characteristics of a much simpler stochastic process, a certain random walk on $\ZP$, called the \emph{customer random walk}, that we introduce in~\S\ref{sec:intro-customer} below. To preview this aspect, we say here that part~\ref{thm:lamperti-b} of Theorem~\ref{thm:lamperti} corresponds to the case when the customer walk is  \emph{positive recurrent}, and part~\ref{thm:lamperti-a} to when it is \emph{null-recurrent}. Previously, it was known that
$v_0 <0$ if and only if the customer walk is transient (see Remark 1.11 of~\cite{MPW25}). 
\item 
\label{rems:lamperti-vii}
The model with rates~\eqref{eq:a-b-lamperti} is well defined for all $| \mu | < 1/2$,
but it is only in the case $0 < \mu < 1/2$ that there is an admissible solution to~\eqref{eq:stable-traffic}, meaning that 
we are in the setting of Proposition~\ref{prop:known-finite}, which is
the starting point for this paper. 
In the case $-1/2 < \mu < 0$, 
all particles are singleton clouds and travel to the left at their own intrinsic speeds (in excess of $|v_0|$), so the collective behaviour that we are interested in here is absent. It is also worth noting that the restriction $\mu < 1/2$ is needed so that $a_1 >0$ in~\eqref{eq:a-b-lamperti}, but part~\ref{thm:lamperti-b} would still hold true if~\eqref{eq:a-b-lamperti}
was assumed for all $k \geq k_0$ large enough, provided $\rho(0)=\alpha$ is assumed admissible; this would mean all $\mu >1/4$ could be included.
\item 
\label{rems:lamperti-viii}
In the case $\mu =0$, the model of~\eqref{eq:a-b-lamperti} is the \emph{symmetric simple exclusion process} 
and 
 a result of Arratia, Theorem~2 of~\cite[p.~368]{arratia83},
 says that 
 \begin{equation}
     \label{eq:arratia}
\lim_{t\to\infty}  \frac{X_1 (t)}{\sqrt{ t \log t}} = 1, \as
  \end{equation}
As mentioned in the previous remark, the case $\mu =0$ has no admissible solution, 
but nevertheless seems a reasonable comparison for our bounds in~\eqref{eq:poly-transient}; this comparison suggests that it is perhaps the upper bound in~\eqref{eq:poly-transient} that is of the correct order.
    \end{myenumi}
\end{remarks}

As mentioned in Remarks~\ref{rems:lamperti}\ref{rems:lamperti-iv}, Theorem~\ref{thm:lamperti} exposes a natural question:

\begin{problem}
    \label{problem:superdiffusive}
    Do there exist rates $a_k, b_k$ satisfying Condition~\eqref{ass:rates} for which, when started from a finite initial configuration,  the leftmost particle is transient with a polynomially superdiffusive but sub-ballistic rate, i.e., 
    $\log |X_1 (t) | / \log t \to \gamma$ for some $\gamma \in (1/2,1)$?
\end{problem}

Before describing in more detail the customer random walk (in the next section), 
we indicate the other
main contributions of this paper, in addition to
Theorem~\ref{thm:lamperti} above.
\begin{itemize}
    \item
Theorem~\ref{thm:lamperti} shows examples where $X_1$ is positive recurrent, and where it is transient. Examples where $X_1$ is \emph{null} recurrent (appropriately defined) seem to be rarer. We present one such example in Theorem~\ref{thm:null-recurrent} below.
\item 
We also consider the dynamics of $X_1$ started from \emph{stationary} configurations. In Theorem~\ref{thm:rec_stationary} below
we show that in that case $X_1(t) - v t$ is recurrent when $\eta(0) \sim \nu_\rho$
for any admissible $\rho = \rho(v)$. In particular, either (i) $v <0$ (which can only be $v = v_0 <0$) in which case $X_1$ is ballistically transient to the left but oscillates on both sides of its strong law,  (ii) $v=0$ so $X_1$ is recurrent, or (iii) $v > v_0 =0$ so $X_1$ is ballistically transient to the right. In particular, the sub-ballistic transience of Theorem~\ref{thm:lamperti}\ref{thm:lamperti-a} is shown to be possible only when starting \emph{away from stationarity}.
\end{itemize}

\subsection{Introducing the customer random walk}
 \label{sec:intro-customer}

It is well known (see e.g.~\cite{kipnis})
that the inter-particle distances in nearest-neighbour exclusion processes on $\Z$
correspond exactly to Jackson networks of queues; see \S2.1 of~\cite{MPW25}
for a discussion in precisely our setting,   \S3 of~\cite{mmpw} for   the case of finitely many particles, and references therein for more background. 
The queueing representation considers the empty sites between consecutive
particles as \emph{customers} in the (in the present case, infinite) queueing network; we interpret the number of empty sites
between particles $k$ and $k+1$ as the number of customers at queue~$k \in \N$. Jumps in the particle system correspond to customers in the queueing system being served at one queue and then routed to a neighbouring queue, or, specially, 
jumps of the leftmost particle bring customers into the system (if it jumps left) or eject customers from the system (if it jumps right). 
For example, when the second particle jumps to the left
(thus reducing the number of empty sites between the first and the second particles by~$1$, and increasing the number of empty sites between the second and the third particles by the same amount), we may interpret it as ``a customer from the first queue was served, and then went to the second queue''. Notice, in particular,
that a particle's jump to the left implies a customer's jump to the right, and vice-versa.

We state here the formal definition of  the \emph{customer random walk}; one of the main themes of this paper is to explore its connections with the process~$X_1$, i.e.,
the movement of the leftmost particle.

\begin{definition}[Customer random walk]
\label{def:customer_walk}
The customer random walk is  a continuous-time nearest-neighbour random walk~$\zeta := (\zeta_t)_{t \in \RP}$ 
with state space $\ZP$, where
transitions from $k \in \ZP$ to $k+1$ occur at rate $a_{k+1}$
and transitions from $k \in \N$ to $k-1$ occur at rate $b_k$. Unless explicitly specified otherwise,
we will always assume in our calculations 
that~$\zeta_0 = 1$.
\end{definition}
\begin{remark}
    \label{rem:customer_walk}
    The random walk $\zeta$ is essentially  the continuous-time version 
of the walk~$Q$ from \S2.3 of~\cite{MPW25},
and corresponds to the progress of a ``priority customer''
through the queueing network (the priority customer is always served ahead of any other customer in the same queue). In fact, from the point of view of
such a customer, $0$ would be an absorbing state (because entry to state~$0$ represents the customer 
leaving the system), but we make the walk~$\zeta$ irreducible (under Condition~\eqref{ass:rates}) by 
assigning rate~$a_1$ to transitions from~$0$ to~$1$.
\end{remark}

We use $\IP$ and $\IE$ for probability and expectation statements involving $\zeta$,
although there is no need to imagine that $\zeta$ is defined on the same probability space as our particle system~$X$.

We collect some important observations about the random walk~$\zeta$ under
Condition~\eqref{ass:rates}, so that $\zeta$ is irreducible. 
Let~$\tau := \inf \{ t \geq 0 : \zeta_t = 0\}$ be the hitting time of~$0$ for the 
customer random walk. 
It is straightforward to check that~$\alpha$ given by~\eqref{eq:alpha-k-def} is a
reversible measure for~$\zeta$, meaning that~$\zeta$
is positive recurrent (i.e., $\IE \tau < \infty$) if and only if $\sum_{k \in \ZP} \alpha_k<\infty$. On the other hand,
it is a standard result that $\zeta$ is transient ($\IP [\tau =\infty] >0$) if and only if $\sum_{k \in \ZP} (a_{k+1} \alpha_k)^{-1} <\infty$. Hence $\zeta$ is null recurrent if
$\sum_{k \in \ZP} \alpha_k = \sum_{k \in \ZP} (a_{k+1} \alpha_k)^{-1} = \infty$. (See e.g.~\cite[Ch.~8]{anderson} for these well known facts about birth-death processes.)

See Proposition~\ref{prop:walk-summary-known} below for some basic results about how transience or positive recurrence of $\zeta$ leads directly to statements about the dynamics of the leftmost particle process~$X_1$. As we will see in \S\ref{sec:finite_initial}, when $\IP [ \tau < \infty ] =1$, 
finer information about the distribution of~$\tau$ plays an important role in understanding the finer asymptotics of $X_1$. 

The rest of the paper is organized as follows. In \S\ref{sec:finite_initial}, we investigate the behaviour
of the leftmost particle in the case of finite initial configurations. In particular, we give upper and lower bounds on the asymptotic location $X_1(t)$, using a comparison with the M/G/$\infty$ queue and recent results for that~\cite{P25}, and comparison with the process started from stationarity. These tools allow us to establish Theorem~\ref{thm:lamperti} above, and
by a similar method we prove
Theorem~\ref{thm:null-recurrent}, exhibiting an example where the leftmost particle is null recurrent. 
Then in \S\ref{sec:stationarity-start} we consider in more detail the case of stationary initial distributions; the main result here is Theorem~\ref{thm:rec_stationary} which shows oscillation around the stationary strong law behaviour. 

Before proceeding with the main body of the paper, we clarify a minor error from~\cite{MPW25} and then
record the proof of Proposition~\ref{prop:known-finite}.

\begin{remark}
    We take the opportunity here to point out and correct a small but misleading error in~\cite{MPW25}.
The error originates in part (vi) of Proposition 3.1 of~\cite{MPW25},
which should say that if an admissible $\rho = \rho(v)$ is such that $\liminf_{k \to \infty} \rho_k =0$, then $v=0$ (the statement incorrectly claims $v_0=0$ and $\cV = \{0\}$, but the proof only gives $v=0$ for the particular~$v$ in question).     
    This does not impact the main results of~\cite{MPW25}, but does mean that the formulation of Remark~1.15 is incorrect. The correct remark is that under the bounded rates hypothesis,
    Proposition 3.1(iv) of~\cite{MPW25} shows that non-uniqueness of admissible solutions can occur only when $v_0=0$, so any non-minimal admissible solutions $\rho (v)$ must have $v>0$. 
    Other minor changes to the reading of~\cite{MPW25} required to remove reference to the incorrect claim in Proposition 3.1(vi)
    are to delete the
    sentence after the proof of Lemma 4.2, and, just below (2.8) to replace ``$v=v_0=0$'' by
    ``$v=0$'' (which is all that is needed there). 
\end{remark}

\begin{proof}[Proof of Proposition~\ref{prop:known-finite}]
First, the fact that $v_0 < 0$ implies there is a unique admissible solution,
under Condition~\eqref{ass:rates}, is given in Proposition~3.1(iv) of~\cite{MPW25}. Convergence to stationarity is Theorem 1.14 of~\cite{MPW25}. The strong law of large numbers~\eqref{eq:v_0_speed} is a consequence of Theorem 1.9 of~\cite{MPW25}. 
Here, we note that, in order to be able to apply that theorem 
in the case $v_0<0$, we need to check that 
$\overline\beta := \limsup_{k\to\infty}\beta_k = \infty$;
let us show that this is indeed true under 
Condition~\eqref{ass:rates}. First, 
note that we can assume that 
$c:=\liminf_{k\to\infty}\alpha_k>0$
(otherwise, we would obtain a contradiction with the fact
that $\beta_k\geq B^{-1}$ for all~$k$ but 
$\rho=\alpha-|v_0|\beta$ is admissible).
Then, the generic term in~\eqref{eq:beta-k-def}
can be bounded from below as follows:
\begin{equation}
\label{bounding_term_beta_k}
 \frac{a_k\cdots a_{k-m+1}}{b_k\cdots b_{k-m}}
  = \frac{\alpha_k}{b_{k-m}\alpha_{k-m}}
  \geq B^{-1}\frac{\alpha_k}{\alpha_{k-m}}.
\end{equation}
Now, if $\overline\alpha := \limsup_{k\to\infty}\alpha_k < \infty$,
then the right-hand side of~\eqref{bounding_term_beta_k}
is at least $B^{-1}c\overline\alpha^{-1}$, so the sequence
$(\beta_k)_{k\geq 1}$ must grow to infinity (even linearly).
On the other hand, if $\overline\alpha=\infty$, then 
the sequence $(\alpha_k)_{k\geq 1}$ contains a strictly 
increasing subsequence $(\alpha_{k_n})_{n\geq 1}$ 
converging to infinity and such that
$\alpha_{k_n}=\max_{\ell\leq k_n}\alpha_\ell$ for all~$n$.
But then the right-hand side of~\eqref{bounding_term_beta_k}
(with~$k_n$ on the place of~$k$) is at least~$B^{-1}$,
meaning that $\beta_{k_n}\geq B^{-1}k_n$,
which again shows that $\overline\beta=\infty$.
\end{proof}

\section{Finite initial configurations}
\label{sec:finite_initial}

\subsection{Overview}
\label{sec:finite_initial_overview}

The goal of this section is to investigate how 
the recurrence/transience properties of the customer random walk
influence the dynamics of the leftmost particle.
Recall the definition of the queueing process $\eta = (\eta (t))_{t \geq 0}$ from~\eqref{eq:eta-def}, with configurations
$\eta(t) \in \bbD := \ZP^{\N}$.
For a generic $u = (u_k)_{k \in \N} \in \bbD$, we write
\begin{equation}
    \label{eq:u-norm}
\| u \| := \sum_{k \in \N} u_k.
\end{equation}
Note also that, if $\| \eta (0) \| < \infty$, then $\| \eta(t) \| < \infty$ for all $t \geq 0$, and since every customer to enter/leave the queueing system represented by $\eta$ means that the leftmost particle steps to the left/right, we have
\begin{equation}
    \label{eq:queue-leftmost-finite}
    X_1 (0) - X_1 (t) = \| \eta (t) \| - \| \eta (0) \|, \text{ whenever } X(0) \in \bbXB.
\end{equation}

As noted in Remark~1.11 of~\cite{MPW25},
$|v_0|/a_1 = \IP [ \tau = \infty]$ is the escape probability
of the customer random walk~$\zeta$ (see \S\ref{sec:intro-customer} for definitions), i.e., it is the probability
of never reaching~$0$ starting at~$1$.
Since, when $X(0) \in \bbXB$,
$|X_1(t)| = | X_1(t) - X_1(0)|$ is equal to the net inflow of customers to 
the system by time~$t$, as in~\eqref{eq:queue-leftmost-finite}, 
and customers enter the system at rate $a_1$, 
\begin{equation}
\label{eq:ballistic_left}
\liminf_{t\to\infty}\frac{|X_1(t)|}{t}
\geq a_1 \IP [ \tau=\infty ]=|v_0|, \as, \text{ whenever } X(0) \in \bbXB; 
\end{equation}
i.e., if the customer random walk is transient, then the leftmost
particle goes to the left ballistically.
Note that for~\eqref{eq:ballistic_left}
we do \emph{not} need to assume existence 
of admissible solutions:
in that case,
the strong law in Proposition~\ref{prop:known-finite} (which follows from 
Theorem~5.1(ii)
of~\cite{MPW25}) says that the $\liminf$ in~\eqref{eq:ballistic_left}
is a limit, and the inequality an equality. In the more general setting, 
when no admissible solutions exist, 
the inequality 
in~\eqref{eq:ballistic_left} may be strict,
when a finite stable subcloud detaches 
from the main system and goes to $-\infty$ 
with a speed strictly greater than~$|v_0|$. For example, it can be that the customer walk is recurrent (a property determined by $a_k, b_k$ for large $k$ only) so that the right-hand side of~\eqref{eq:ballistic_left} is equal to~$0$, but nevertheless the first few particles separate from the bulk (at a speed determined by $a_k, b_k$ for small $k$ only) and hence the total number of customers in the system diverges.

It holds (see Lemma~4.1 of~\cite{MPW25})
that if $\sum_{k \in \ZP} \rho_k < \infty$, 
then the measure~$\nu_\rho$ given by~\eqref{eq:df_measure_rho} is supported
on configurations~$u \in \bbD$ with $\| u \|< \infty$ (recall~\eqref{eq:u-norm}). That is, if $\sum_{k \in \ZP} \rho_k < \infty$, 
then $\nu_\rho ( \bbDB ) = 1$ where
\begin{equation}
\label{eq:D-F-def}
\bbDB := \Bigl\{u \in \bbD : \| u \| < \infty \Bigr\}.  
\end{equation}
(Note that $\eta (t) \in \bbDB$ if and only if $X(t) \in \bbXB$.)
Also, Proposition~3.1(v) 
and Lemma~4.2 of~\cite{MPW25} show that
if there is an admissible solution~$\rho$ with $\sum_{k \in \ZP} \rho_k < \infty$,
then in fact $v_0 = 0$ and  $\sum_{k \in \ZP} \alpha_k < \infty$,
and $\nu_\alpha$ is the unique invariant measure supported 
on $\bbDB$. 
Moreover, Theorem~1.12 of~\cite{MPW25}
shows that if $\sum_{k \in \ZP}\alpha_k<\infty$ and there exists an
admissible solution, then
the queue process $\eta$ is positive recurrent
whenever we start from a   configuration $\eta (0)$ with finitely many initial customers in the system 
(in this case, $\eta$ is a countable Markov chain living on the space~$\bbDB$ defined in~\eqref{eq:D-F-def}). 
Then, by~\eqref{eq:queue-leftmost-finite}, positive recurrence of $\eta$ implies positive
recurrence of $X = (X_1,\eta)$, and that $X_1$ is ergodic in the sense of~\eqref{eq:X1-ergodic},
where
     \begin{equation}
         \label{eq:pi-def}
     \pi (x) := \nu_\alpha \bigl\{ u \in \bbD : \| u \|  =  \|  \eta (0) \| - x \bigr\} , \text{ for } x \in \Z.
     \end{equation}
We often will simply say ``$X_1$ is positive recurrent'' in this case.



Recalling that positive recurrence of~$\zeta$ is equivalent to $\sum_{k \in \ZP} \alpha_k<\infty$,
as discussed in~\S\ref{sec:intro-customer}, the previous discussion (which mostly  recalls results from~\cite{MPW25}) can thus be summarized as follows.

\begin{proposition}
\label{prop:walk-summary-known}
    Suppose that Condition~\eqref{ass:rates} holds and that $X(0) \in \bbXB$ is a finite initial configuration. Recall that $\zeta$ denotes the customer random walk from Definition~\ref{def:customer_walk}.
 \begin{thmenumi}[label=(\alph*)]
        \item 
        \label{prop:walk-summary-known-a}
If $\zeta$ is transient, then $X_1$ is ballistically transient to $-\infty$ with speed at least $|v_0| >0$, as given by~\eqref{eq:ballistic_left}.
\item 
\label{prop:walk-summary-known-b}
If there exists an admissible solution with $\nu_\rho (\mathbb{D}_F) =1$, and  $\zeta$ is positive recurrent, then
$X_1$ is positive recurrent, satisfying~\eqref{eq:X1-ergodic}.
    \end{thmenumi}
\end{proposition}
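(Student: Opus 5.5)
Part~\ref{prop:walk-summary-known-a} will follow by a customer-counting argument based on~\eqref{eq:queue-leftmost-finite}. Since $X(0)\in\bbXB$, we have $-X_1(t)=\|\eta(t)\|-\|\eta(0)\|$, so it suffices to show that the number of customers in the system grows at least like $|v_0|t$. Customers enter through queue~$1$ whenever the leftmost particle jumps left. While particle $1$ is not blocked (which is always the case for a left jump), these entries form a Poisson process of rate $a_1$.

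To each entering customer we attach a priority-type path, as in Remark~\ref{rem:customer_walk}. Under the Harris construction the trajectory of a tagged customer, ignoring blocking, is a copy of $\zeta$ started from~$1$ and killed at~$0$. Blocking can only prevent exits, so it only helps the lower bound. Customers whose attached walk never returns to $0$ never leave the system.

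By thinning, the escaping customers arriving by time $t$ form a process whose count $N(t)$ satisfies $N(t)/t\to a_1\IP[\tau=\infty]$ a.s. This uses the independence of the walks attached to distinct arrivals, which can be arranged by giving each customer its own independent clocks; the priority-customer coupling from~\S2.3 of~\cite{MPW25} provides this. It then follows that $\|\eta(t)\|\ge N(t)-\|\eta(0)\|$, giving~\eqref{eq:ballistic_left}, together with the identity $a_1\IP[\tau=\infty]=|v_0|$ from Remark~1.11 of~\cite{MPW25}, which is positive since $\zeta$ is transient.

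For part~\ref{prop:walk-summary-known-b}, I will chain known results. Lemma~4.2 and Proposition~3.1(v) of~\cite{MPW25} give $v_0=0$, so $\rho(v_0)=\alpha$ is admissible, and $\sum_k\alpha_k<\infty$, which agrees with positive recurrence of $\zeta$. Theorem~1.12 of~\cite{MPW25} then yields positive recurrence of $\eta$ on the countable space $\bbDB$ with invariant law $\nu_\alpha$. By~\eqref{eq:queue-leftmost-finite}, $X_1(t)$ is a deterministic function of $\eta(t)$, namely $\|\eta(0)\|-\|\eta(t)\|$. Hence the ergodic theorem for positive recurrent chains gives~\eqref{eq:X1-ergodic} with $\pi$ as in~\eqref{eq:pi-def}.

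The main obstacle is part~\ref{prop:walk-summary-known-a}: making rigorous the claim that blocking and service order never cause an escaping tagged customer to leave the system. This requires the priority coupling from~\cite{MPW25}, and I would rely on that coupling rather than rederive it.
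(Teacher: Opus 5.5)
Your proposal is correct and follows essentially the same route as the paper. Part~(a) is the paper's customer-counting argument via~\eqref{eq:queue-leftmost-finite}: arrivals occur at rate $a_1$, each never leaves with probability $\IP[\tau=\infty]=|v_0|/a_1$, and you make this more explicit by attaching independent skeleton walks to customers. Part~(b) chains Proposition~3.1(v), Lemma~4.2 and Theorem~1.12 of~\cite{MPW25} with~\eqref{eq:queue-leftmost-finite} exactly as the paper does (the natural bound is $\|\eta(t)\|\ge N(t)$, giving $-X_1(t)\ge N(t)-\|\eta(0)\|$, though your weaker inequality also yields~\eqref{eq:ballistic_left} since $\|\eta(0)\|<\infty$).
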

\begin{remark}
    \label{rem:walk-summary-known}
    For Proposition~\ref{prop:walk-summary-known}\ref{prop:walk-summary-known-b}
    it is essential to assume existence of admissible
solutions: otherwise, since modifying~$a_1$
does not affect the properties of~$\zeta$,
it is straightforward to construct an example where the leftmost
particle goes to $-\infty$ ballistically even though~$\zeta$
is positive recurrent.
\end{remark}

For the results that we establish later in this section, we  need to recall 
some more detailed information about the stationary measures $\nu_\rho$
corresponding to admissible solutions $\rho$ to the stable traffic
equation~\eqref{eq:stable-traffic} described in \S\ref{sec:intro-dynamics}.
Denote by $\geo{q}$ the (shifted) geometric distribution on $\ZP$ with success parameter~$q \in (0,1]$, i.e., $\xi \sim \geo{q}$ means that $\IP[\xi = n] = (1-q)^n q$ for $n \in \ZP$;
note then $\IE \xi = (1-q)/q$. For an admissible solution~$\rho$, 
let $\nu_\rho$ be the product measure $\bigotimes_{k \in \N} \geo{ 1-\rho_k}$ on $\bbD = \ZP^\N$, i.e.,
\begin{equation}
\label{eq:df_measure_rho}
\nu_\rho ( u )
   = \prod_{k\in A} (1-\rho_k) \rho_k^{u_k}, 
   \text{ for all  finite } A\subset \N 
   \text{ and all } u = (u_k)_{k \in A}  \in \ZP^A.
\end{equation}
In Proposition~1.6 of~\cite{MPW25} it was shown that~$\nu_\rho$ is an invariant measure for the queue process~$\eta$ (that is, if we start the queue process by choosing the initial configuration according to~$\nu_\rho$, which we write $\eta(0) \sim \nu_\rho$, 
then $\eta (t) \sim \nu_\rho$ at any~$t>0$).

Proposition~\ref{prop:walk-summary-known} shows
the implications of transience or positive recurrence of the customer random walk~$\zeta$
for the dynamics of the leftmost particle $X_1$. For the remainder of this section,
we will investigate the case where the customer random walk
is \emph{null-recurrent} (so that $\sum_{k \in \ZP} \alpha_k = \infty$)
and an admissible solution exists. 
An example of such a situation
was treated in Theorem~1.19 of~\cite{MPW25},
where the customer random walk was essentially 
a simple symmetric random walk;
see also~\cite{arratia83}.
In general, from the previous discussion we can say that
in this situation $v_0=0$, and hence the motion of the leftmost
particle is not ballistic, by~\eqref{eq:v_0_speed}. On the other hand, $X_1$
is not positive recurrent (if it were so, then so would be 
the queue process~$\eta$, given that the fact that 
$X_1$ reached its rightmost possible position means 
that~$\eta$ reached the empty configuration).

This leaves the possibility that $X_1$ can be transient or null recurrent;
we show that both are possible, although null-recurrent examples seem rare (see
Theorem~\ref{thm:null-recurrent} in \S\ref{sec:null_recurrent} below).
In the transient case, we investigate the 
``sub-ballistic rate'' at which~$|X_1|$ converges to infinity. Here Theorem~\ref{thm:lamperti}, that we prove in \S\ref{sec:Lamperti} below, gives a class of examples with \emph{subdiffusive} transience at all possible polynomial rates in $(0,1/2)$ ($1/2$ corresponding to diffusivity). 
In the rest of \S\ref{sec:finite_initial}, we discuss
these questions. First in \S\S\ref{sec:MGinfinity}--\ref{sec:upper-bound} we provide
some quite general results that use more detailed information about the tail of~$\tau$
to obtain quantitative bounds on the growth rate of $|X_1|$.

Before continuing,
let us define another quantity that we need, called the 
\emph{scale function} 
for the customer random walk:
\begin{equation}
\label{eq:scale_func-def}
f(0) :=  0, \text{ and } 
f(k) := \frac{1}{a_1} +\frac{b_1}{a_1 a_2} +
\cdots  + \frac{b_1\cdots b_{k-1}}{a_1 a_2 \cdots a_k}, ~ k \in \N.
\end{equation} 
It is straightforward to check that the process 
$f(\zeta_{t\wedge \tau})$ is a martingale; this fact 
can be conveniently used to estimate hitting probabilities
for~$\zeta$ via the optional stopping theorem.

\subsection{Lower bound: comparison with the \texorpdfstring{$M/G/\infty$}{M/G/infinity} queue}
\label{sec:MGinfinity}

 An  $M/G/\infty$
queue is defined in the following way:
customers arrive according to a Poisson
process with rate~$\lambda$; upon arrival, a customer     
 enters to service, and the service times
are i.i.d.\ non-negative 
random variables with some general distribution;
 let~$S$ be a generic random variable 
with that distribution. 
Denote by~$Y_t$ the number of customers
in the system at time~$t$; 
we say that the system is transient
if $Y_t\to\infty$ a.s., and recurrent
if $\liminf_{t\to\infty}Y_t = 0$ a.s.\
(notice that this implies that the system visits
all its ``states'' infinitely many times a.s.).

The relevance of the  $M/G/\infty$
queue for our semi-infinite particle system is due to a stochastic domination property which says
that the negative displacement of the leftmost particle in the particle system
dominates an appropriate $M/G/\infty$ queue. Heuristically, this is due to the fact that a customer
in the particle system may get delayed
because (s)he has to compete for service
with other customers. 
In other words, if we consider a hypothetical situation when there is an infinite number
of servers in all queues so that all customers enter to service immediately without
any competition, then the total number of customers in our system would be precisely
described by an $M/G/\infty$ queue (the service time~$S$ of the $M/G/\infty$ queue would 
correspond to the total time that the customer spends in the system; note that, without competition, these would become customer-wise independent).
A precise statement is the following; we defer the proof to the end of this section.

\begin{lemma}
\label{lem:MGinfty-dominance}
Let $u \in \bbDB$. 
  There exists a probability space, with probability $\tIP$,
    supporting stochastic processes $X = (X_1,\eta)$ on $\bbX = \Z \times \bbDB$
    and $Y$ on $\ZP$, where $X$ has the law of the
    particle system under $\IP$ started from $X_1(0) =0$ and $\eta (0) = u$,
    and~$Y$ has the law of an $M/G/\infty$ queue with arrival rate~$a_1$, service 
time distributed as~$\tau$ (the hitting time of~$0$
for the customer random walk $\zeta$ started at~$1$, as defined in \S\ref{sec:intro-customer}),
and $Y_0 =0$, such that
\[ \tIP ( Y_t \leq \|u \| - X_1 (t) \text{ for all } t \geq 0 )  =1. \]
\end{lemma}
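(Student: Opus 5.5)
We will build the coupling inside the queueing picture. By \eqref{eq:queue-leftmost-finite}, $\|u\| - X_1(t) = \|\eta(t)\|$, so it suffices to couple $\eta$ with an $M/G/\infty$ queue $Y$ such that $Y_t \le \|\eta(t)\|$ for all $t$. We do this by tracking only the customers who arrive after time $0$, and within that population we will bound the number of \emph{tagged} customers present.

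\textbf{Construction.} Arrivals to $\eta$ are the left jumps of the leftmost particle. The first inter-particle gap can always grow, so these occur at constant rate $a_1$ regardless of the state. Let $N$ be a Poisson process of rate $a_1$ that drives these jumps, and use its points as the arrival times of $Y$. Inside the queueing network we fix a service discipline, in the sense of which customer in a queue moves when that queue performs a service. We choose a \emph{preemptive priority by arrival order}: in each queue, the customer served is the one with the latest arrival time, and the $\|u\|$ initial customers count as earliest.

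The key step is the independence claim. Under this LIFO-type priority, the highest-priority customer in its queue moves right at rate $a_{k+1}$ and left at rate $b_k$. These are exactly the rates of the customer walk $\zeta$, since a customer in queue $k$ moves to $k+1$ when particle $k+1$ jumps left (rate $a_{k+1}$) and to $k-1$ when particle $k$ jumps right (rate $b_k$). It leaves when it reaches $0$. A customer of lower priority is simply frozen in its current position.

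So we attach to the $j$-th arrival its own independent copy $\zeta^{(j)}$ of the customer walk started at $1$, with hitting time $\tau^{(j)}$. The customer's actual trajectory follows $\zeta^{(j)}$, but time-changed: the clock runs only while the customer is the top-priority customer in its queue. Its actual sojourn time $D_j$ therefore satisfies $D_j \ge \tau^{(j)}$.

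To make this rigorous we should not build the particle system first. Instead we build it from the customers' walks, using the Harris construction in the queueing representation. Each queue $k$ has two Poisson clocks, of rates $a_{k+1}$ and $b_k$. When a clock rings, the top-priority customer in queue $k$ consumes the next increment of its own private walk. The private walks are i.i.d.\ copies of the embedded jump chain of $\zeta$, together with exponential holding times drawn from the clocks. By the strong Markov property and memorylessness, the total dynamics of $\eta$ has the correct generator. This is because, whatever discipline is used, the counts $\eta_k$ evolve with the Jackson rates: service in queue $k$ occurs at rates $a_{k+1}$ and $b_k$ whenever $\eta_k > 0$, and routing is to the right or left accordingly.

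\textbf{Conclusion.} Define $Y_t := \#\{j : \text{arrival time } T_j \le t < T_j + \tau^{(j)}\}$. The $\tau^{(j)}$ are i.i.d.\ with the law of $\tau$, independent of $N$, so $Y$ is an $M/G/\infty$ queue with $Y_0 = 0$. Since $D_j \ge \tau^{(j)}$, every customer counted in $Y_t$ is still present in $\eta(t)$, which gives $Y_t \le \|\eta(t)\| = \|u\| - X_1(t)$ for all $t$ simultaneously, almost surely.

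\textbf{Main obstacle.} The delicate step is verifying that the customer-driven construction produces exactly the law of $\eta$, and that the private walks are genuinely i.i.d.\ copies of $\zeta$ independent of arrivals. To handle this we would treat $\eta$ as a countable-state Markov chain on $\bbDB$ and check the generator. This is legitimate because $\|\eta(t)\| < \infty$ and the rates are bounded under \eqref{ass:rates}, so there is no explosion. We then check that each private walk's increments are consumed only at clock rings of the correct rate, so that conditionally on the past, each top customer's next move has the $\zeta$ law.

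A simpler alternative is a thinning argument. Give each customer its own independent exponential clocks of rates $a_{k+1}$ and $b_k$. The top-priority customer's clocks are the queue's clocks, while the other customers' rings are discarded. Superposition and memorylessness then make this equivalent to the queue-level clocks, and each customer's own clocks generate $\zeta^{(j)}$ run in its own local time.
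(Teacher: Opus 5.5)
Your proposal is correct and follows essentially the same route as the paper. Both arguments fix a priority service discipline and attach to each arriving customer its own copy of the customer walk, run only while that customer is being served, so its sojourn time dominates $\tau^{(j)}$; both also demote the $\|u\|$ initial customers to lowest priority. The paper uses FIFO with the initial customers declared second class instead of your preemptive latest-arrival priority, which is an immaterial difference, and your treatment of the time-change construction is more careful than the paper's brief sketch.
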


This domination result gives a strategy to obtain a lower bound on $|X_1|$ via a lower bound on the 
$M/G/\infty$ 
queue length process $Y$ with arrival rate $\lambda = a_1$. 
The latter was studied in~\cite{P25}, and  we reproduce here 
the key results from 
Theorems~1 and~2 of~\cite{P25}. First, 
 if 
\begin{equation}
\label{cond_MGinfty_rec}
 \int_0^\infty 
 \exp\big(-\lambda \IE(S\wedge t)\big)\, \ud t = \infty,
\end{equation}
then the $M/G/\infty$ system is recurrent.%
\footnote{In fact, the condition~\eqref{cond_MGinfty_rec} assures that 
the expected size of the set $\{t: Y_t=0\}$ is infinite; although the process~$Y$ is not Markovian,
it is still possible to argue that the preceding fact implies the recurrence.} 
On the other hand, if
\begin{equation}
\label{cond_MGinfty_trans}
 \int_0^\infty \big(\IE(S\wedge t)\big)^k
 \exp\big(-\lambda \IE(S\wedge t)\big)\, \ud t < \infty
,  \text{ for all } k \in \ZP, 
\end{equation}
then the $M/G/\infty$ system is transient.
Moreover, in the transient case,
for $q\in(0,1)$, define 
\begin{equation}
\label{eq:gamma-q-def}
\gamma_q := 1 - q + q\log q > 0.
\end{equation}
If it holds that, for some $q\in(0,1)$, 
\begin{equation}
\label{cond_growth_Y}
\int_0^\infty \exp\big(-\gamma_q\lambda \IE(S\wedge t)\big)\, \ud t < \infty,
\end{equation}
then\footnote{In fact, \eqref{cond_growth_Y} implies that the expected size of the 
set $\{t: Y_t < q\lambda \IE(S\wedge t)\}$ is finite, and then it is possible to argue that 
this set has to be bounded a.s.}
\begin{equation}
\label{eq_growth_Y}
\IP\big[Y_t\geq q\lambda \IE(S\wedge t)
\text{ for all large enough }t\big]=1.
\end{equation}

 Therefore, as a corollary
of the stochastic domination described above, and the
results from~\cite{P25} on transience just quoted, 
we obtain the following.

\begin{theorem}
\label{thm:MGinfty_comparison}
Suppose that Condition~\eqref{ass:rates} holds, and that $\eta(0) \in \bbDB$. 
Let~$\tau$ be the hitting time of~$0$ for the customer
random walk~$\zeta$ started at~$1$.
\begin{thmenumi}[label=(\alph*)]
        \item
        \label{thm:MGinfty_comparison-a}
        Suppose that
\begin{equation}
\label{cond_part_trans}
 \int_0^\infty \big(\IE(\tau\wedge t)\big)^k
 \exp\big(-a_1 \IE(\tau\wedge t)\big)\, \ud t
  < \infty, \text{ for all } k \in \ZP.
\end{equation}
Then $X_1(t)\to -\infty$ a.s., as $t\to\infty$.
\item 
\label{thm:MGinfty_comparison-b}
Suppose that, for some $q\in(0,1)$ and with $\gamma_q$ defined at~\eqref{eq:gamma-q-def},
\begin{equation}
\label{cond_growth_X_1}
\int_0^\infty \exp\big(-\gamma_q a_1
\IE(\tau\wedge t)\big)\, \ud t < \infty.
\end{equation}
Then 
\begin{equation}
\label{eq_growth_X_1}
\IP\big[X_1(t)\leq -q a_1 \IE(\tau\wedge t)
\text{ for all large enough }t\big]=1.
\end{equation}
\end{thmenumi}
\end{theorem}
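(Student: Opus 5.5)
The plan is to combine the coupling of Lemma~\ref{lem:MGinfty-dominance} with the results of~\cite{P25} quoted above, applied with arrival rate $\lambda = a_1$ and service time $S$ distributed as $\tau$. Set $u := \eta(0) \in \bbDB$, so $\|u\| < \infty$, and recall $X_1(0)=0$. Lemma~\ref{lem:MGinfty-dominance} gives a probability space carrying a copy of the particle system $X$ with this initial condition and an $M/G/\infty$ queue $Y$ with $Y_0=0$ such that, almost surely, $Y_t \leq \|u\| - X_1(t)$ for all $t \geq 0$, i.e.
\begin{equation*}
X_1(t) \leq \|u\| - Y_t, \text{ for all } t \geq 0.
\end{equation*}
Both conclusions concern only the law of $X_1$, so it is enough to prove them on this coupled space.

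For part~\ref{thm:MGinfty_comparison-a}, hypothesis~\eqref{cond_part_trans} is exactly~\eqref{cond_MGinfty_trans} with $\lambda = a_1$ and $S \stackrel{d}{=} \tau$. By Theorem~2 of~\cite{P25}, $Y$ is therefore transient, so $Y_t \to \infty$ almost surely. Since $\|u\|$ is a finite constant, the displayed inequality gives $X_1(t) \to -\infty$ almost surely. There is one point to check: $\tau$ may be infinite with positive probability. In that case $\IE(\tau \wedge t)$ is still finite for each $t$, and the results of~\cite{P25} allow defective or heavy-tailed service times. If needed, I would handle that case directly through Proposition~\ref{prop:walk-summary-known}\ref{prop:walk-summary-known-a}.

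For part~\ref{thm:MGinfty_comparison-b}, hypothesis~\eqref{cond_growth_X_1} is~\eqref{cond_growth_Y}, so~\eqref{eq_growth_Y} holds for the same $q$. Almost surely, then, $Y_t \geq q a_1 \IE(\tau\wedge t)$ for all large $t$, and hence $X_1(t) \leq \|u\| - q a_1 \IE(\tau \wedge t)$ for all large $t$. This differs from~\eqref{eq_growth_X_1} by the additive constant $\|u\|$, which has to be absorbed.

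To absorb it, pick $q' \in (q,1)$ close enough to $q$ that~\eqref{cond_growth_X_1} still holds with $\gamma_{q'}$. This is possible because $\gamma_q$ is continuous and decreasing in $q$ and the integrand is monotone in the exponent. More precisely, the integral is finite for $q$, and we need finiteness for some slightly larger $q'$, which has a smaller $\gamma_{q'}$. That is not automatic, so this is the step I expect to be the main obstacle. I would handle it with two observations. First, integrability forces $\IE(\tau\wedge t) \to \infty$ and in fact $\IE(\tau\wedge t) \geq c\log t$ eventually, since $\IE(\tau \wedge t)$ is increasing in $t$ and the integral is finite. Second, the finite-$\IE(\tau\wedge t)$ regime matters only if $\IE\tau < \infty$, in which case the integral in~\eqref{cond_growth_X_1} diverges, so this regime is excluded. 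Because $\IE(\tau\wedge t) \to \infty$, we have
\begin{equation*}
\|u\| - q a_1 \IE(\tau\wedge t) \leq -q'' a_1 \IE(\tau\wedge t)
\end{equation*}
eventually for any $q''<q$. Combining the two observations, I would apply~\eqref{eq_growth_Y} with $q$ itself and then read the conclusion with the slightly smaller constant $q''$, a loss that costs nothing. If the statement must hold with exactly $q$, I would instead use that $\{q : \eqref{cond_growth_X_1} \text{ holds}\}$ is an open condition. The fallback is to note that when $\IE(\tau \wedge t)\geq c\log t$ with $c\gamma_q a_1>1$, the margin $\gamma_q - \gamma_{q'}$ can be made small enough to preserve integrability.
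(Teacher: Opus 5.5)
Your route is the paper's: the coupling of Lemma~\ref{lem:MGinfty-dominance} gives $X_1(t)\le \|u\|-Y_t$, and you combine it with the quoted results of~\cite{P25}. Part~(a) is fine as written, since a finite additive constant does not affect $Y_t\to\infty$. Part~(b) has a gap. You correctly see that the coupling only gives $X_1(t)\le \|u\|-q a_1\IE(\tau\wedge t)$; the paper's one-line corollary silently passes over this constant. None of your three remedies recovers the statement with the \emph{same} $q$:
\begin{itemize}
\item Passing to some $q''<q$ proves a strictly weaker statement.
\item The set of $q$ satisfying \eqref{cond_growth_X_1} need not be open. Since $\gamma_q$ is decreasing, that set is an initial interval of $(0,1)$, and it can be closed at its right end. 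For example, if $\gamma_q a_1\IE(\tau\wedge t)=\log t+2\log\log t$ for large $t$, the integral converges for this $q$ (integrand $1/(t\log^2 t)$). For every $q'>q$ the integrand is $t^{-r}(\log t)^{-2r}$ with $r=\gamma_{q'}/\gamma_q<1$, which is not integrable. Nothing in your argument excludes such a tail for $\tau$.
\item Your fallback needs $\IE(\tau\wedge t)\ge c\log t$ with $c\gamma_q a_1>1$. Monotonicity plus integrability only give $\gamma_q a_1\IE(\tau\wedge t)-\log t\to\infty$, which is exactly the borderline case $c\gamma_q a_1=1$ above.
\end{itemize}

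The fix is to go one step inside~\cite{P25} instead of perturbing $q$. Since $Y_0=0$, $Y_t$ is Poisson with mean $m(t):=a_1\IE(\tau\wedge t)$, even if $\tau$ is defective. Take the exponential Chebyshev bound with $\theta=\log(1/q)$, the same choice that produces $\gamma_q$. For any constant $K\ge 0$ it gives
\[
\IP\big[Y_t<q\,m(t)+K\big]\le \re^{\theta(q m(t)+K)}\,\IE\re^{-\theta Y_t}=q^{-K}\exp\big(-\gamma_q m(t)\big).
\]
So the constant $K=\|u\|$ costs only the factor $q^{-\|u\|}$. Hypothesis \eqref{cond_growth_X_1} with the original $q$ then shows that $\{t: Y_t<q\,m(t)+\|u\|\}$ has finite expected Lebesgue measure.

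To upgrade this to a.s.\ boundedness, use two facts: $m$ is nondecreasing, and $Y$ can increase only at arrival epochs of a rate-$a_1$ Poisson process independent of the past. So whenever (a stopping time) $t_0$ lies in the set, with probability at least $\re^{-a_1}$ the whole interval $[t_0,t_0+1]$ lies in it. An unbounded set would then have infinite expected measure. On the coupling space this gives $X_1(t)\le \|u\|-Y_t\le -q a_1\IE(\tau\wedge t)$ for all large $t$, which is \eqref{eq_growth_X_1} with the given $q$.
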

\begin{remark} 
    \label{rem:MGinfty_comparison}
    Note that Theorem~\ref{thm:MGinfty_comparison} does not require the existence of an admissible solution.
\end{remark}

We will use this result in \S\ref{sec:Lamperti},
but, for now, let us make the following
observation.
There is a gap between~\eqref{cond_MGinfty_rec}
and~\eqref{cond_MGinfty_trans},
in the sense that it is possible to choose
the distribution of~$S$ in such a way that
neither of these two relations hold. 
This is because,
as shown in Theorem~1 of~\cite{P25},
for an $M/G/\infty$ queue it is possible to have 
coexistence of recurrent and transient states
(i.e., to have 
$\liminf_{t\to\infty} Y_t = \kappa$ a.s.~for a constant $\kappa \in (0, \infty)$).
It is then natural to ask whether the following  coexistence
phenomenon can occur in our particle system:

\begin{problem}
Do there exist rate parameters satisfying~Condition~\eqref{ass:rates}
for which it holds that $0 < \liminf_{t \to \infty} |X_1 (t) | < \infty$, a.s.?
\end{problem}

In such a situation,  there would be
always some empty sites in the particle system configuration, but their 
number does not converge to infinity.
For now, we do not have any further 
insights on this.

To finish this section, we will give the proof of Lemma~\ref{lem:MGinfty-dominance}.
Here (and in other stochastic comparison arguments later on) it is useful to use the concept of \emph{second-class customers} (as in \S4.2 of~\cite{MPW25}) to compare different initial conditions.
For fixed $u \in \bbD$, we write~$\IP_u$ to denote the law of the particle process with initial configuration $X_1 (0) =0$ and $\eta (0) = u$; similarly,
for~$\nu_\rho$ one of the stationary measures given by~\eqref{eq:df_measure_rho}, we write $\IP_{\nu_\rho}$ for initial condition $X_1 (0) =0$ and $\eta (0) \sim \nu_\rho$.
Sometimes we will be only concerned with the queueuing process $\eta$ (and not~$X_1$), in which case we may still refer to $\IP_u$ and $\IP_{\nu_\rho}$ to specify laws of~$\eta$ alone.

Suppose $\eta(0) = u \in \bbD$, and declare all customers in the queueing network at time~$0$ to be second class; any customers that arrive subsequently are first class. First-class customers get priority, so
whenever a service event occurs, if there is at least one first-class customer in the queue, it is a first-class customer that is served. Then observing all the customers in the system, we see a process following law $\IP_u$, while observing only the first-class customers we see a process following law $\IP_\0$ started from $\0 := (0,0,\ldots) \in \bbD$ (empty). This construction (and a similar one started from $\nu_\rho$) gives the following stochastic monotonicity properties for the queueing process (cf.~Proposition 4.3 of~\cite{MPW25}).

\begin{lemma}
    \label{lem:queue-monotonicity}
    For every $u \in \bbD$, we can build on a common probability space $(\Omega, \cF, \tIP)$ processes $\eta^\0$ and $\eta^u$
    for which $\eta^0$ has law $\IP_\0$, $\eta^u$ has law $\IP_u$, and $\tIP ( \eta^\0_k (t) \leq \eta^u_k (t) \text{ for all } k \in \N \text{ and all } t \geq 0 ) =1$. The same is true for $\eta^0$ and $\eta^{\nu_\rho}$, where $\eta^{\nu_\rho}$ has law $\IP_{\nu_\rho}$ for an admissible~$\rho$.
\end{lemma}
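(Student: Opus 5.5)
We will prove Lemma~\ref{lem:queue-monotonicity} by the second-class customer construction described just before its statement, realised on a single Harris-type graphical construction. Attach to each queue $k \in \N$ two independent Poisson clocks: a clock of rate $b_k$ (service with routing to queue $k-1$, or exit from the system if $k=1$) and a clock of rate $a_{k+1}$ (service with routing to queue $k+1$). Add an arrival clock of rate $a_1$ feeding queue~$1$. All clocks are independent. These are the customer moves that correspond to the particle jumps of \S\ref{sec:intro-customer}. At queue $k$, a right move is a left jump of particle $k+1$, suppressed exactly when $\eta_k=0$, and a left move is a right jump of particle $k$, suppressed exactly when $\eta_k=0$. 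So the rate of a service event at queue $k$ depends only on whether $\eta_k>0$, which is the Jackson-network property we exploit.

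The construction runs as follows. Start with $\eta(0)=u$ and label all initial customers second class. Label every customer that enters through the arrival clock first class. When a clock at queue $k$ rings and the queue is nonempty, one customer moves in the prescribed direction. That customer is first class if queue $k$ contains any first-class customer, and second class otherwise.

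\begin{itemize}
\item \emph{Total counts have law $\IP_u$.} The total counts $\eta^u_k(t)$ evolve by exactly the rules of the queue process started at $u$, since a move occurs iff $\eta^u_k>0$.
\item \emph{First-class counts have law $\IP_\0$.} Let $\eta^\0_k(t)$ be the number of first-class customers at queue $k$. A first-class customer moves at a ringing of a clock at queue $k$ iff $\eta^\0_k>0$. Second-class customers are invisible to this rule, so $\eta^\0$ is driven by the same clocks with the same rule, started from $\0$. Hence $\eta^\0$ has law $\IP_\0$.
\item \emph{Domination.} Because $\eta^\0_k\le \eta^u_k$ counts a subset of the customers, the domination holds identically in $k$ and $t$.
\end{itemize}

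For the stationary case, first sample $\eta(0)\sim\nu_\rho$ independently of the clocks, then run the same construction conditionally on $\eta(0)$.

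The main obstacle is well-definedness. When $\|u\|=\infty$ (as for $\nu_\rho$ with $\sum\rho_k=\infty$), there are infinitely many clocks and infinitely many customers, so we must check that the labelled dynamics is well defined. Also, the law of the total process must be identified with the process whose existence is given by Proposition~1.6 of~\cite{MPW25}. I would handle this by the standard finite-range argument underlying that proposition. Under Condition~\eqref{ass:rates}, for fixed $T$ almost surely there are infinitely many $k$ such that neither clock at queue $k$ rings during $[0,T]$, since each such event has probability at least $\re^{-2BT}$ independently. These blocked edges cut $\N$ into finite blocks whose evolution on $[0,T]$ involves finitely many events, and within each block the labelled dynamics is a finite construction. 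The first-class counts live on the blocks containing arrivals, or are zero, so they are finite as well. Consistency across $T$ then gives the coupling on $[0,\infty)$. Since the graphical construction of~\cite{MPW25} uses these same clocks, the unlabelled process coincides with it, which completes the identification of laws.
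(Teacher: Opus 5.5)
Your proposal is correct and is essentially the paper's argument: the paper obtains the lemma from the same second-class customer construction (initial customers second class, arriving customers first class with service priority), with a pointer to Proposition~4.3 of~\cite{MPW25}. Your block-decomposition argument for well-definedness when $\|u\|=\infty$, using queues whose clocks do not ring on $[0,T]$, fills in a detail that the paper leaves implicit in the Harris construction of~\cite{MPW25}.
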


\begin{proof}[Proof of Lemma~\ref{lem:MGinfty-dominance}]
The result essentially follows from Proposition~2.3 of~\cite{MPW25}. 
Indeed, consider the queueing process~$\eta$ started with no initial customers. 
For definiteness, suppose first-in, first-out service. For each customer,
if we count only accumulated service time when the customer is ``at the front of the queue'',
their total time in the system is distributed as~$\zeta$ (see Remark~\ref{rem:customer_walk}); in the 
$M/G/\infty$ queue, all customers are always ``at the front of the queue''. Hence 
there is a coupling in which each customer enters both systems at the same time, but stays in the $\eta$ system for at least as long as they stay in the $M/G/\infty$ system, and hence 
$\tIP ( \| \eta (t)\| \geq Y_t \text{ for all } t \geq 0) = 1$ in that coupling.
But by~\eqref{eq:queue-leftmost-finite} (recall $\|  \eta (0)\|  = 0$ for now) $\| \eta (t)\| = - X_1(t)$, which establishes the claim in the lemma for the case $\| u \| =0$. 

In the general case, we start with $\eta (0) = u \in \bbDB$. Treat these $\| u \|$ initial customers as second-class customers in the $\eta$ system, and then couple the process of subsequent first-class customers in the $\eta$ system to the $M/G/\infty$ queue, as before, to see that 
$\tIP ( \| \eta (t)\| \geq Y_t \text{ for all } t \geq 0) = 1$, still with $Y_0 = 0$,
 but now  $\| \eta (t)\| - \| \eta (0)\| = - X_1(t)$ by~\eqref{eq:queue-leftmost-finite}.
\end{proof}

\subsection{Upper bound: using the stationary distribution}
\label{sec:upper-bound}

The comparison with the  $M/G/\infty$ queue from \S\ref{sec:MGinfinity} only seems useful to
obtain lower bounds on $|X_1|$, because of the direction of the stochastic comparison we discussed there.
For upper bound on $|X_1|$, we take a quite different approach.

We now state
a general result about
an upper bound on the growth of~$|X_1|$.

\begin{theorem}
\label{thm:upper_growth}
Suppose that Condition~\eqref{ass:rates} holds, that $v_0 =0$, and 
$\alpha$ is an admissible solution. 
Let~$h: \RP\to (1,\infty)$ be a continuous, 
 increasing to infinity, differentiable 
function such that $h'(t)\leq a_1$ for all~$t$.
Suppose also  that there exist functions
$\phi : \ZP \to [0,1]$ and $\ell: \ZP\to\RP$ with $\ell(0)=0$ and $\ell$ increasing to infinity, such that
\begin{equation}
\label{est_stat_measure_ell_k}
 \nu_\alpha \Bigl\{ u \in \bbD : \sum_{j=1}^k u_j>\ell(k) \Bigr\}
  \leq \phi(k), \text{ for every } k \in \N,
\end{equation}
and, with $\zeta$ denoting the customer random walk, 
\begin{equation}
\label{eq:upper_growth_cond}
 \int_1^{\infty} \Bigl( \phi(k_t)
  + t\IP\big[\max_{0 \leq s\leq t}\zeta_s
   \geq k_t+1\big] \Bigr)\,\ud t<\infty,
\end{equation}
where $k_t : = \max \{k \in \ZP : \ell(k) \leq h(t)-1 \}$.
Take an initial configuration  $\eta (0) \in \bbDB$. Then
\begin{equation}
\label{eq:upper_growth}
 \IP\big[ - X_1(t) \leq  h(t)
   \text{ eventually}\, \big] = 1.
\end{equation} 
\end{theorem}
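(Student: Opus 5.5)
The plan is to bound $-X_1(t)=\|\eta(t)\|-\|\eta(0)\|$ at a fixed time $t$ by splitting the customers present at time $t$ into those sitting in queues $1,\dots,k_t$ and those sitting further out. The first group will be controlled by comparison with the stationary measure $\nu_\alpha$ via~\eqref{est_stat_measure_ell_k}, and the second by the customer random walk via~\eqref{eq:upper_growth_cond}. Integrability in $t$ of the resulting failure probability is then upgraded to an almost-sure ``eventually'' statement using the bounded slope $h'\le a_1$.

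\emph{Step 1: reduction to an empty start.} As in the proof of Lemma~\ref{lem:MGinfty-dominance}, I declare the $\|u\|$ initial customers ($u=\eta(0)\in\bbDB$) second class. The first-class customers then form a process $\eta^\0$ with law $\IP_\0$. Since second-class customers can only leave, $\|\eta(t)\|-\|u\|\le \|\eta^\0(t)\|$. By Lemma~\ref{lem:queue-monotonicity}, I couple $\eta^\0$ with a stationary copy $\eta^{\nu_\alpha}$ satisfying $\eta^\0_k(t)\le\eta^{\nu_\alpha}_k(t)$ for all $k$ and $t$.

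\emph{Step 2: the two terms.} Splitting at $k_t$,
\[
\|\eta^\0(t)\| \le \sum_{j=1}^{k_t}\eta^{\nu_\alpha}_j(t) + N_t ,
\]
where $N_t$ is the number of first-class customers located beyond queue $k_t$ at time $t$.
\begin{itemize}
\item Since $\eta^{\nu_\alpha}(t)\sim\nu_\alpha$, the first sum exceeds $\ell(k_t)\le h(t)-1$ with probability at most $\phi(k_t)$, by~\eqref{est_stat_measure_ell_k}.
\item Each first-class customer arrives at rate $a_1$, so at most a Poisson$(a_1 t)$ number arrive by time~$t$. Each customer's trajectory is a time-change of the customer walk $\zeta$ that is slowed down by waiting (cf.\ Remark~\ref{rem:customer_walk} and Proposition~2.3 of~\cite{MPW25}). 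Hence a customer can reach queue $k_t+1$ by time $t$ only if the corresponding walk has $\max_{s\le t}\zeta_s\ge k_t+1$. This gives $\IE N_t\le a_1 t\,\IP[\max_{0\le s\le t}\zeta_s\ge k_t+1]$.
\end{itemize}
By Markov's inequality, $\IP[N_t\ge1]$ is at most the same quantity, and therefore
\[
\IP\bigl[-X_1(t)> h(t)-1\bigr]\le \phi(k_t)+a_1t\,\IP\bigl[\max_{s\le t}\zeta_s\ge k_t+1\bigr].
\]
By~\eqref{eq:upper_growth_cond}, the right-hand side is integrable over $[1,\infty)$. Consequently the Lebesgue measure of $\{t: -X_1(t)>h(t)-1\}$ has finite expectation, and so is a.s.\ finite.

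\emph{Step 3: from occupation time to ``eventually'' (main obstacle).} The delicate point is that integrability only controls occupation time, not every fixed time. I would argue by contradiction, using stopping times $T_n$: the successive times, at least one unit apart, at which $-X_1(T_n)>h(T_n)$. Fix $\delta=\min(1,1/a_1)$, so that $h$ increases by at most $a_1\delta\le 1$ on an interval of length $\delta$. Consider the event that particle~1 makes no rightward jump during $[T_n,T_n+\delta]$; by the strong Markov property this has probability at least $\re^{-B\delta}$, uniformly in the past. On that event $-X_1(s)$ does not decrease on the interval, so $-X_1(s)>h(T_n)\ge h(s)-1$ for all $s\in[T_n,T_n+\delta]$. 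If infinitely many $T_n$ existed, then (by a conditional Borel--Cantelli / Lévy argument) infinitely many of these events would occur, producing infinite Lebesgue measure for the set in Step~2, which contradicts its a.s.\ finiteness. This yields~\eqref{eq:upper_growth}. The care needed here is in making the conditional Borel--Cantelli argument rigorous with the random times $T_n$. That uses only $h'\le a_1$, the bounded rates of Condition~\eqref{ass:rates}, and the $-1$ built into the definition of $k_t$.
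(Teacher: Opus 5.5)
Your proposal is correct and follows essentially the same route as the paper. The shared steps are: reduction to the empty start via second-class customers, domination by $\nu_\alpha$ on queues $1,\dots,k_t$ combined with a customer-walk bound for queues beyond $k_t$, Fubini to get finite expected occupation time, and the bounded-slope argument using an interval on which particle~1 does not move right. Your Wald/Markov bound replaces the paper's union bound with a Poisson large-deviation cutoff, which is only a cosmetic difference.

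One small point is in your favour. In Step~2 you use the $-1$ built into $k_t$ to control $\{-X_1(t)>h(t)-1\}$, so your Step~3 gives exactly $-X_1(t)\le h(t)$ eventually. The paper's write-up controls $\{-X_1(t)>h(t)\}$ instead, so strictly it only concludes $-X_1(t)\le h(t)+1$ eventually.
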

 \begin{proof}
 First note that we can argue using 
 the stochastic domination from Lemma~\ref{lem:queue-monotonicity} that to prove~\eqref{eq:upper_growth} for the system started from $\eta(0) = u \in \bbDB$
 it is sufficient to  prove the same for the system started from $\eta (0 ) = \0$. Indeed, 
 recall from~\eqref{eq:queue-leftmost-finite} that for $\eta (t) \in \bbDB$ and $X_1(0)=0$,
 $- X_1(t) = \| \eta (t) \| - \| \eta (0)\|$. Thus under the coupling described in Lemma~\ref{lem:queue-monotonicity},
 we can build queueing processes $\eta^\0$ and $\eta^u$ and then construct corresponding particle systems with $- X_1^\0 (t) = \| \eta^\0 (t) \|$ and $ - X_1^u (t) = \| \eta^u (t) \| - \| u \| $, all on the same probability space, in such a way that $\| \eta^\0 (t) \| \leq \| \eta^u (t) \| \leq \| \eta^\0 (t) \| + \| u \|$ for all $t \geq 0$, since the total number of second class customers in the system (which count towards $\eta^u$ but not $\eta^\0$) starts from $\| u \|$ and is non-increasing. Thus the coupling gives $-X^u_1 (t) \leq - X^\0_1 (t)$ for all $t \geq 0$.

Thus it suffices to suppose that $\eta(0) = \0$.
 Consider configurations
\[
 G_t := \Big\{ u \in \bbD : \sum_{j=1}^{k_t} u_j
        \leq\ell(k_t)\Big\}.
\]
Lemma~\ref{lem:queue-monotonicity} shows that
$\IP_{\0} [ \eta (t) \in G_t ] \geq \IP_{\nu_\alpha} [ \eta(t) \in G_t]$, since
the queueing process under law~$\IP_{\nu_\alpha}$
dominates the process under law~$\IP_{\0}$.
Moreover, by hypothesis~\eqref{est_stat_measure_ell_k}, 
$\IP_{\nu_\alpha} [ \eta(t) \in G_t ]  \geq 1-\phi(k_t)$.
So we conclude that 
$\IP_\0 [ \eta(t) \in G_t ]  \geq 1-\phi(k_t)$.
Hence we obtain 
\begin{align}
    \label{eq:G-t-decompose}
    \IP_\0 [ | X_1 | > h (t) ] 
    & \leq \IP_\0 [ \eta (t) \notin G_t ] + \IP_\0 [ \{ \eta (t) \in G_t \} \cap \{ | X_1 | > h (t) \} ] \nonumber\\
  &  \leq \phi (k_t ) + \IP_\0 [ \{ \eta (t) \in G_t \} \cap \{ | X_1 | > h (t) \} ] .
\end{align}
For the last probability in~\eqref{eq:G-t-decompose}, suppose that both  $|X_1(t)|>h(t)$ and $\eta (t) \in G_t$. Then,  
since $\ell(k_t)\leq h(t)-1$, 
we  would have that $\eta_{j_0}(t) \geq 1$ 
for some~$j_0\geq k_t+1$, that is, at least
one customer went farther than~$k_t$ by time~$t$.
Then, by Proposition~2.3 of~\cite{MPW25},
it is straightforward to obtain that, for some $c>0$
(note that, with probability at least $1-\re^{-ct}$,
not more than 
$2a_1 t$
customers come to the system
before time~$t$)
\begin{align}
\label{eq:large-walk-bound}
\IP_\0 [ \{ \eta (t) \in G_t \} \cap \{ | X_1 | > h (t) \} ]
& \leq 
 \IP_\0 [\text{$\eta_j(t) \geq 1$ 
for some $j \geq k_t+1$}] \nonumber\\
& \leq c t \IP\Bigl[\max_{0\leq s\leq t}\zeta_s
   \geq k_t+1\Bigr] + \re^{-ct}.
\end{align}
Combining~\eqref{eq:G-t-decompose} and~\eqref{eq:large-walk-bound}, and using Fubini's theorem,
we obtain (here $| \, \cdot \, |$ denotes Lebesgue measure)
that, for some constant $C< \infty$, 
\[
\IE\big|\{t \geq 0: |X_1(t)|>h(t)\}\big|
\leq C + 
C \int_0^{\infty} \Bigl( \phi(k_t)
  + t \IP\big[\max_{0 \leq s\leq t}\zeta_s
   \geq k_t+1\big] \Bigr)\,\ud t. 
\]
By hypothesis~\eqref{eq:upper_growth_cond},
it follows that this quantity is finite, and so $|\{t \geq 0 : |X_1(t)|>h(t)\}|$
is a.s.\ finite. This does not automatically
imply that the set $\{t: |X_1(t)|>h(t)\}$ is a.s.\ 
bounded (because of continuous time), but let us 
instead show that the set $\{t: |X_1(t)|>h(t)+1\}$
must be a.s.\ bounded.
Recall that we assumed that $h'(t)\leq a_1$,
which means that $|X_1(t)|>h(t)+1$
implies that $|X_1(t)|>h(t+a_1^{-1})$.
Now, regardless of the past, with a uniformly
positive probability
the process~$|X_1|$
does not change its value on a time interval
of length~$a_1^{-1}$ (i.e., the leftmost particle
does not jump), meaning that 
if~$t_0\in\{t: |X_1(t)|>h(t)+1\}$ then
$[t_0,t_0+a_1^{-1}]\subset\{t: |X_1(t)|>h(t)\}$
with at least that probability. From this,
we obtain that the set $\{t: |X_1(t)|>h(t)+1\}$
must be a.s.\ bounded; indeed, if it were unbounded,
then, by the preceding argument, 
$|X_1|$ would exceed~$h$ on an a.s.\ infinite
sequence of non-intersecting intervals
of lengths~$a_1^{-1}$,
and so the expected
size of $\{t: |X_1(t)|>h(t)\}$ would be infinite.
This verifies~\eqref{eq:upper_growth} when $\eta (0) = \0$, and hence concludes the proof of Theorem~\ref{thm:upper_growth} as argued in the first paragraph of this proof.
\end{proof}

\subsection{Lamperti-type rates and proof of Theorem~\ref{thm:lamperti}}
\label{sec:Lamperti}

We note that Theorems~\ref{thm:MGinfty_comparison}
and~\ref{thm:upper_growth} can be used to deal with 
the ``dog and sheep'' example of Theorem~1.19 of~\cite{MPW25}. Rather than discussing this in detail,
we turn to the 
(somewhat more difficult) 
Lamperti-type rates example from Theorem~\ref{thm:lamperti}.
Thus we assume rates of the form~\eqref{eq:a-b-lamperti} where $0 < \mu  < 1/2$. 

Observe that, since $a_k < b_k$ for all $k \in \N$,
$\alpha_k < 1$ for all $k \in \N$, and so 
there is at least one admissible $\rho$, following
Remarks~\ref{rems:known-finite}\ref{rems:known-finite-i}.
Next, note that, by~\eqref{eq:alpha-k-def} and~\eqref{eq:a-b-lamperti},
\begin{align}
 \alpha_n &= \prod_{k=1}^n\frac{1-\frac{2\mu}{k}}{1+\frac{2\mu}{k}} = \prod_{k=1}^n \big(1-\tfrac{4\mu}{k}+O(k^{-2})\big)
 \nonumber\\
 &= \exp\Big(-\sum_{k=1}^n \big(\tfrac{4\mu}{k}+O(k^{-2})\big)\Big) \asymp n^{-4\mu}.
\label{eq:calc_alpha_n_Lamperti}
\end{align}
(Here and throughout the paper,
we write $f(n) \asymp g(n)$ to mean that there exists $c \in (1,\infty)$ for which $g(n)/c < f(n) < c g(n)$ for all but finitely many $n \in \N$.)
By~\eqref{eq:calc_alpha_n_Lamperti} and the fact $a_n \to 1/2$ as $n \to \infty$, from the classical classification for birth-death processes (see~\S\ref{sec:intro-customer} and~\cite[Ch.~8]{anderson}) we see that the customer random walk~$\zeta$ is in this case
 positive recurrent if $\mu > 1/4$, and null recurrent if $0 < \mu  \leq 1/4$.

\begin{proof}[Proof of Theorem~\ref{thm:lamperti}]
If $\mu > 1/4$, then, as explained above, $\zeta$ is positive recurrent and
Proposition~\ref{prop:walk-summary-known}\ref{prop:walk-summary-known-b}
gives  part~\ref{thm:lamperti-b} of the theorem.
It remains to prove part~\ref{thm:lamperti-a}; thus, suppose $0 <\mu  < 1/4$ (we comment on the critical case $\mu = 1/4$ in Remark~\ref{rem:lamperti-critical} below).
We are going to show that,  in this case, $X_1$
is transient, and obtain some estimates on the growth
of~$|X_1|$. 


Lemma~2.7.5 from~\cite{mpw-book}  implies
that, if starting at~$h\sqrt{t}$ with large 
enough~$h$, the customer's walk
will survive with at least a constant probability up 
to time~$t$.
Also,
a straightforward calculation similar 
to~\eqref{eq:calc_alpha_n_Lamperti} shows that
its scale function (recall~\eqref{eq:scale_func-def})
is
$f(x)\asymp x^{1+4\mu}$. 
By the Optional Stopping Theorem,
this means that the probability that a customer (starting at~$1$)
comes to~$m$ without hitting~$0$ is of order $m^{-(1+4\mu)}$.
Therefore, for large enough~$h$, 
we can write
(recall that, unless otherwise stated,
we assume that the walk~$\zeta$ starts at~$1$)
\begin{align}
\IP[\tau\geq t]
 &\geq \IP[\text{$\zeta$ goes to $h\sqrt{t}$
 before hitting~$0$, then survives till $t$}]
  \geq C_1 t^{-\frac{1}{2}(1+4\mu)}.
\label{tail_cust_lifetime_Lamperti}
\end{align}
for some $C_1>0$.
Then, for some $C_2>0$ we have
\[
 \IE(\tau\wedge t) = \int_0^t \IP[\tau\geq s]\, \ud s
  \geq C_2 t^{\frac{1}{2}-2\mu}.
\]
Consequently, using
Theorem~\ref{thm:MGinfty_comparison}\ref{thm:MGinfty_comparison-b}
for $0 < \mu< 1/4$ we will obtain, for small enough~$\eps_0$,
\begin{equation}
\label{growth_Lamp_lower}
|X_1(t)| \geq \eps_0 t^{\frac{1}{2}-2\mu},  
 \text{ eventually}, \as,
\end{equation}
thus, in particular, showing that~$X_1$ is transient. 

Then, with the help of Theorem~\ref{thm:upper_growth}
we obtain an upper bound for the growth
of~$|X_1(t)|$ in the case $0 < \mu < 1/4$. Namely, we
will now show that, for large enough~$C'$,
\begin{equation}
\label{Lamp_not_so_fast}
|X_1(t)| \leq C' (t\log t)^{\frac{1}{2}-2\mu} ,  
 \text{ eventually}, \as
\end{equation}
To apply Theorem~\ref{thm:upper_growth},
we first need to obtain a suitable large deviation
estimate for $u \in \bbD$ under $\nu_\alpha$,  
as in~\eqref{est_stat_measure_ell_k}.
Under $\nu_\alpha$, $u_1, \ldots, u_{k}$ are independent
random variables with $u_j \sim \geo{1-\alpha_j}$,
and the expectation of~$u_j$ is $\alpha_j/(1-\alpha_j) = O (j^{-4\mu})$ as $j \to \infty$.
Then, we do a standard calculation: first, recall that the 
moment generating function of $\geo{1-a}$ 
is $\frac{1-a}{1-a\re^\lambda}$, $\lambda<-\log a$.
Note the rates~\eqref{eq:a-b-lamperti} are such that $\sup_{j \in \N} \alpha_j = \alpha_1 < 1$; then, taking $\lambda\in (0,-\log \alpha_1)$ we have $\lambda < - \log \alpha_j$ for all $j$, and then, for $M \in \RP$,
\begin{align}
 \nu_\alpha \Bigl\{ u \in \bbDB :  \sum_{j=1}^k u_j > M k^{1-4\mu} \Bigr\} 
  &= 
 \nu_\alpha \Bigl\{ u \in \bbDB : \exp\Big(\lambda \sum_{j=1}^ku_j\Big) 
 > \exp\big(\lambda M k^{1-4\mu}\big)\Bigr\} 
\nonumber
\\
 &\leq \exp\Big(-\lambda M k^{1-4\mu}
  +\sum_{j=1}^k\log\frac{1-\alpha_j}{1-\alpha_j\re^\lambda}\Big).
\nonumber
\end{align}
Here it holds that, for all $\lambda >0$ and all $j \in \N$,
\[
\log\frac{1-\alpha_j}{1-\alpha_j\re^\lambda} 
= \log \left( 1 + \frac{\alpha_j(\re^\lambda -1)}{1-\alpha_j\re^\lambda} \right) \leq
\frac{\alpha_j(\re^\lambda -1)}{1-\alpha_j\re^\lambda},
\]
and so (recall $\alpha_j \leq \alpha_1 < 1$) there are constants $C < \infty$ and $\lambda \in (0, -\log \alpha_1)$ such that 
\[ \nu_\alpha \Bigl\{ u \in \bbDB :  \sum_{j=1}^k u_j > M k^{1-4\mu} \Bigr\} 
  \leq \exp\Big(-\lambda M k^{1-4\mu}
  + C \lambda \sum_{j=1}^k \alpha_j \Big).
\]
It follows from~\eqref{eq:calc_alpha_n_Lamperti} that
we can choose $M$ large enough such that, for some $c>0$,
\begin{equation}
     \nu_\alpha \Bigl\{ u \in \bbDB : \sum_{j=1}^k u_j > M k^{1-4\mu}\Bigr\} 
\leq \exp(-c k^{1-4\mu}).
\label{LD_sum_etas_Lamperti}
\end{equation}
Take $h(t)=C''(t\log t)^{\frac{1}{2}-2\mu}$,
and note that, with $\ell(k)=Mk^{1-4\mu}$,
we have $k_t = C_1 (t \log t)^{1/2}$,
with large~$C_1$.
Then we note that, 
 dominating the customer random walk with the symmetric simple random walk $S = (S_t)_{t \geq 0}$ with $S_0 =0$ and jumps of size to $+1$ and $-1$ each at rate $1/2$, and e.g.\ Proposition~2.1.2(b) of~\cite{LawLim10} for the discrete-time 
 chain and Poisson large deviations bounds,
we have with a large~$C_2$,
\begin{align*}
 \IP\big[\max_{0 \leq s\leq t}\zeta_s
   \geq k_t+1\big] & 
   \leq \IP\big[\max_{0 \leq s\leq t} S_s
   \geq k_t+1\big] \leq \exp(-C_2 \log t),
\end{align*}
where we can make $C_2 \in (0,\infty)$ as large as we like by choosing $C_1 \in (0,\infty)$ appropriately. 
Then, Theorem~\ref{thm:upper_growth} applies
and we obtain~\eqref{Lamp_not_so_fast}.
\end{proof}

\begin{remark}
    \label{rem:lamperti-critical}
It is not at all clear to us what to expect in the critical case
$\mu=1/4$.
As discussed following~\eqref{eq:calc_alpha_n_Lamperti}, the customer
random walk $\zeta$ is null recurrent.
Even to prove transience of $X_1$, we 
would have to do quite a fine analysis
of the distribution of~$\tau$:
it is clear that we will have $\IP [ \tau>t ]\sim C/t$, 
but, to apply Theorem~\ref{thm:MGinfty_comparison},
one would need to know the value of~$C$
(or at least show that $C>1$).
What is somewhat troubling, is that this
constant would change if we
modify the transition probabilities in finitely
many places, so it seems to be quite subtle indeed.
In any case, even if for this concrete model 
(with $\mu=1/4$) defined here the motion of the leftmost particle proves to be transient, the question remains: what if we further modify 
the customer random walk, by introducing a suitable
$O(\frac{1}{k\log k})$ correction into the transition
probabilities, making it more critically null-recurrent? For now, it is unclear to us 
if it is possible to make $|X_1|$ (null) recurrent as well in this way
(we refer to Theorem~\ref{thm:null-recurrent} below for an example where we can verify null recurrence).
As mentioned in~\S\ref{sec:MGinfinity},
one might even ask 
if an ``intermediate regime'' (i.e., neither recurrent 
nor transient) is possible (similarly to the case 
of the $M/G/\infty$ in~\cite{P25}).
\end{remark}

\subsection{An example with a null-recurrent leftmost particle}
\label{sec:null_recurrent}

The goal of this section is to demonstrate an example 
of rates satisfying Condition~\eqref{ass:rates} that admits an admissible solution,
starts from
$X_1 (0) = 0$ and $\eta(0) \in \bbDB$,  
and where
$X_1$ is null recurrent; see Theorem~\ref{thm:null-recurrent} below for the precise statement. 
Proposition~\ref{prop:walk-summary-known} shows that 
the customer random walk must itself be null recurrent to find such an example. Moreover,  (see~\eqref{eq:queue-leftmost-finite}) recurrence of $X_1$ will follow if we can establish recurrence of $\eta$ on $\bbDB$.

Let us consider a very rapidly growing sequence $(w_n)_{n \in \N}$,
defined by~$w_1=2$, $w_{n+1}=w_n^7$ (so that $w_n=2^{7^{n-1}}$).
For $k\geq 1$, denote $r_k=2(w_1+\cdots+w_{k-1})+w_k+1$ 
and $h_k=2(w_1+\cdots+w_k)+1$ (that is, we have
$h_k=r_k+w_k$ and $r_{k+1}=h_k+w_{k+1}$).
We then set 
\begin{equation}
\label{eq:null-recurrent-example}
(a_k,b_k) = 
 \begin{cases}
    (1,\re), & k=1,\ldots,r_1,\\
    (\re,1), & k=r_1+1,\ldots, h_1,\\
    (1,\re), & k=h_1+1,\ldots, r_2,\\
    (\re,1), & k=r_2+1,\ldots,h_2,\\
    \text{and so on.}
 \end{cases}
\end{equation}
Here is the main result of this section.

\begin{theorem}
    \label{thm:null-recurrent}
    Consider the example with rates given by~\eqref{eq:null-recurrent-example},
    and suppose that $X_1 (0) = 0$ and $\eta(0) \in \bbDB$. Then $X_1$ is null recurrent, i.e.,  (i) $X_1 (t) \to -\infty$ in probability, but (ii) $\{ t \geq 0 : X_1(t) = 0\}$ is unbounded, a.s.
\end{theorem}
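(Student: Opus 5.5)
The plan is to exploit the separation of scales built into~\eqref{eq:null-recurrent-example}, and to reduce both statements about $X_1$ to statements about $\|\eta\|$ via~\eqref{eq:queue-leftmost-finite}. By the second-class customer coupling (Lemma~\ref{lem:queue-monotonicity}, as in the first paragraph of the proof of Theorem~\ref{thm:upper_growth}), it suffices to take $\eta(0)=\0$. The $\|u\|$ initial customers are second class and leave in a.s.\ finite time, because $\zeta$ is recurrent; so they affect neither (i) nor (ii).

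\textbf{Step 1: the landscape.} From~\eqref{eq:alpha-k-def}, $\log\alpha_k$ decreases by $1$ on each $(1,\re)$ stretch and increases by $1$ on each $(\re,1)$ stretch. A direct count gives $\log\alpha_{h_n}=-1$ and $\log\alpha_{r_n}=-1-w_n$ for all $n$. Hence $\alpha_k\le \re^{-1}<1$, so $\alpha$ is admissible by Remarks~\ref{rems:known-finite}\ref{rems:known-finite-i}. Also $\alpha_k\ge \re^{-2}$ on the $O(1)$-neighbourhoods of each $h_n$, and these recur infinitely often, so $\sum_k\alpha_k=\infty$. Thus $\zeta$ is not positive recurrent; it is null recurrent because the scale function $f$ in~\eqref{eq:scale_func-def} is infinite in the limit.

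Qualitatively, $\zeta$ (and each customer) drifts towards $0$ on $(h_{n-1},r_n]$ and towards $h_n$ on $(r_n,h_n]$, so there is a ``well'' near each $h_n$. Using $f$ and optional stopping, a customer started at $1$ reaches $h_n$ before $0$ with probability about $\re^{-w_1-\cdots-w_n}\approx \re^{-w_n(1+o(1))}$. Once in well $n$, it needs time of order $\re^{w_n}$ (up to polynomial factors in $w_n$) to climb back over the barrier of height $w_n$. Since $w_{n+1}=w_n^7$, these scales are wildly separated.

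\textbf{Step 2: localisation to a finite network.} Fix $n$ and consider the window $I_n:=[\exp(w_n^2),\exp(w_n^3)]$. Customers arrive at rate $a_1=1$. Each customer passes $r_{n+1}$ before time $\exp(w_n^3)$ with probability at most $\exp(w_n^3)\re^{-w_{n+1}(1-o(1))}$ (a union bound over excursions, as in~\eqref{eq:large-walk-bound}). Hence, with probability at least $1-\exp(2w_n^3-w_n^7/2)$, which is summable in $n$, no customer crosses $r_{n+1}$ before the end of $I_n$. On this event $\eta$ restricted to queues $1,\dots,r_{n+1}$ coincides with a \emph{finite} Jackson network $\eta^{(n)}$, reflected at $r_{n+1}$. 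Its stationary law is product-geometric with parameters $\alpha_k$, $k\le r_{n+1}$, by the finite theory of~\cite{mmpw}. For this network I will show:
\begin{itemize}
\item[(a)] its stationary probability of the empty state is $\prod_{k\le r_{n+1}}(1-\alpha_k)\ge \exp(-Cw_n)$, because $\alpha_k$ is exponentially small in the distance from $\{h_1,\dots,h_n\}$ except on $O(w_n)$ sites;
\item[(b)] its mixing (or regeneration) time is at most $\exp(Cw_n)$;
\item[(c)] its stationary mean of $\|\cdot\|$ is at least $c\,w_n\to\infty$, from the sites near $h_n$, where $\alpha_k\ge \re^{-2}$ over a stretch of length of order $w_n$.
\end{itemize}

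\textbf{Step 3: conclusions.} For (ii): by (a) and (b), during $I_n$, whose length $\exp(w_n^3)$ is far larger than $\exp(2Cw_n)$, the finite network hits the empty state with probability $1-o(1)$, and in fact summably close to $1$ by splitting $I_n$ into many independent blocks of length $\exp(2Cw_n)$. By Borel--Cantelli, $\eta(t)=\0$, i.e.\ $X_1(t)=0$, for some $t\in I_n$ for all large $n$, which gives (ii).

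For (i): every large $t$ lies in some range $[\exp(C'w_n),\exp(w_{n+1}/2)]$, and these ranges cover a neighbourhood of infinity since $C'w_n\ll w_{n+1}/2$. On such a range, Step~2 (with the window adapted to this range) plus (b) shows that the law of $\eta^{(n)}(t)$ is close to its stationary law. Then (c), together with concentration of the sum of independent geometric variables, gives $\IP[\|\eta(t)\|\le M]\to0$ for every fixed $M$. Customers beyond $r_{n+1}$ only add to $\|\eta\|$, so this bound survives the truncation, and (i) follows.

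\textbf{Main obstacle.} I expect the hardest step to be (b), the quantitative mixing-time bound $\exp(Cw_n)$ for the finite network with $n$ nested wells, uniformly in the number of customers trapped. My first attempt would be a regeneration argument. From any state with few customers, each trapped customer exits well $k\le n$ within time $\exp(Cw_k)$ with probability bounded below. The arrival rate into well $n$ is about $\re^{-w_n}$, comparable to the exit rate, so the number of customers in well $n$ behaves like a stable $M/M/\infty$-type count. I would bound return times to $\0$ via a Lyapunov function $\sum_k \eta_k/\alpha_k$-type weighted by the depth of the well containing $k$. If this proves too delicate, a fallback is to use the $M/G/\infty$ domination of Lemma~\ref{lem:MGinfty-dominance} for the lower bound in (i) directly: compute $\IE(\tau\wedge t)\gtrsim w_n$ on the relevant range, which gives $\|\eta(t)\|\ge Y_t\to\infty$ in probability. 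This would reserve the finite-network analysis only for hitting $\0$ in (ii).
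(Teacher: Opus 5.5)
\textbf{Gap in part (ii).} Your proof of (ii) stands or falls with (b), and (b) is neither proved nor true in the form you state it.

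\emph{Why (b) fails as stated.} Customers leave only through queue~$1$, at rate at most $b_1=\re$. So a state holding $N$ customers needs time of order $N$ to empty, and no bound $\exp(Cw_n)$ can hold ``uniformly in the number of customers trapped''.

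\emph{Why Step~3 needs it anyway.} Splitting $I_n$ into ``independent blocks'' of length $\exp(2Cw_n)$ requires exactly this uniformity, because the state at the start of each block is random and uncontrolled. You would therefore need two further ingredients, and your regeneration/Lyapunov sketch supplies neither:
\begin{itemize}
\item a bound on the number of customers present;
\item a spectral-gap or coupling estimate for an inhomogeneous Jackson network with $n$ nested wells.
\end{itemize}

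\emph{The picture of the wells is also wrong.} Since $\log(a_k/b_k)=\pm1$, the sequence $\log\alpha_k$ has slopes $\pm1$. Near $h_n$, $\alpha_k\ge\re^{-2}$ only for $k\in\{h_n-1,h_n,h_n+1\}$, not on a stretch of length of order $w_n$. Two consequences:
\begin{itemize}
\item the stationary mean in (c) is $\sum_{k\le r_{n+1}}\alpha_k/(1-\alpha_k)\asymp n$, not $\ge c\,w_n$ (it still diverges);
\item $\IE(\tau\wedge t)$ in your fallback is of order $n$, one unit per well reached, not $w_n$.
\end{itemize}

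\textbf{How to close it.} Your localisation works without any mixing estimate. By detailed balance, $\alpha_{k-1}a_k=\alpha_kb_k$, so the truncated chain $\eta^{(n)}$ is reversible with respect to $\pi^{(n)}=\bigotimes_{k\le r_{n+1}}\geo{1-\alpha_k}$. It is irreducible with bounded rates, and $\pi^{(n)}(\0)\ge\re^{-Cn}$. Kac's formula gives an expected return time to $\0$ of $1/(a_1\pi^{(n)}(\0))\le\re^{Cn}$. Hence
\[
\IP_\0[\eta\text{ never returns to }\0]\le \re^{Cn}/T+\IP_\0[\text{some customer reaches queue } r_{n+1}+1 \text{ by time } T].
\]
Take $T=\exp(w_n^2)$. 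The first term tends to~$0$ trivially, and the second by your Step~2 bound. So $\eta$ is recurrent on $\bbDB$. Being irreducible, from any $\eta(0)$ it revisits $\eta(0)$, where $X_1=0$, at unbounded times.

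\textbf{Your reduction to $\eta(0)=\0$.} The argument above also replaces this reduction, which is flawed in two ways:
\begin{itemize}
\item recurrence of $\zeta$ alone does not show that lowest-priority customers are ever served;
\item even after they leave, $X_1^u=X_1^\0+\|u\|$, so $\eta^\0=\0$ gives $X_1^u=\|u\|$, not $0$.
\end{itemize}

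\textbf{Comparison with the paper.} The paper's proof of (ii) is purely static. It applies Nash-Williams~\cite{NW59} to the network on $\bbDB$ with conductances $\nu_\alpha(u)\lambda(u,u')$, taking as cut-sets the edges between weight levels $\Delta_n=\{u:\sum_k ku_k=n\}$; every transition changes the weight by exactly one. The partition bound $|\Delta_n|\le\re^{\gamma_1\sqrt n}$, together with $\nu_\alpha(u)\le\re^{-\gamma_4w_j^{2/3}}$ on $\Delta_{r_j}$, forces the conductance of $\Pi_{r_j}$ to zero, with no time scales or couplings needed.

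Part (i) is just Corollary~1.16 of~\cite{MPW25}, equivalently \eqref{eq:local_convergence} combined with $\sum_k\alpha_k=\infty$. Your $M/G/\infty$ fallback is also valid: $Y_t$ is Poisson with mean $a_1\IE(\tau\wedge t)\to a_1\IE\tau=\infty$, and Lemma~\ref{lem:MGinfty-dominance} then gives $-X_1(t)\to\infty$ in probability.
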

\begin{proof}
It is straightforward to obtain that $\rho=\alpha$
is admissible; indeed, we have that (denoting 
also $h_0:=1$), for $j\in \N$, 
\[
\alpha_k   =
 \begin{cases}
  \re^{-k+h_{j-1}-1}, & \text{for }k\in[h_{j-1},r_j],\\
  \re^{-w_j-1 + (k-r_j)}, & \text{for }k\in[r_j+1,h_j],
 \end{cases}
\]
so that, in particular, $\alpha_k\leq \re^{-1}$ for all~$k \in \N$.
Also, we observe that $\alpha_{h_j}=\re^{-1}$ for all~$j \in \ZP$,
meaning that $\sum_{k=1}^\infty \alpha_k = \infty$;
that is, we already know that $X_1$ cannot be 
positive recurrent. Indeed, Corollary~1.16 of~\cite{MPW25} shows that $X_1 (t) \to -\infty$ in probability, as claimed in part~(i) of the theorem. The rest of the proof is devoted to establishing part~(ii).

First we explain the intuition for the construction. The transition rates were chosen in such a way
that in each of the intervals~$[r_k,r_{k+1}]$
the customer's walk has ``drift inside'' (directed towards~$h_k$),
which reminds us
of the so-called potential wells, the notion frequently
used when studying random walk in one-dimensional random environments, see e.g.~\cite{FGP10}.
The general idea of this example 
is that a customer needs time at least of order $\re^{w_n}$
to go out of a potential well
on the ``scale''~$n$, but
time $\re^{w_{n+1}}\gg \re^{w_n}$ is needed for the system 
to ``find-and-explore''
the next well; so, hopefully, before the system manages
to ``advance'', many instances with empty queues will occur 
with a very high probability. 

So, when starting from a finite configuration,
we are dealing with the countable Markov chain $\eta$ on the state space~$\bbDB$. 
Recall that $\nu_\alpha$ defined in~\eqref{eq:df_measure_rho}
is a stationary and reversible
measure for $\eta$. Therefore the Markov chain $\eta$
can be represented as an electric network: there is an edge between
two configurations~$u,u'\in\bbDB$ if it is possible to obtain~$u'$
 from~$u$ in just one transition (i.e., a customer going from one queue
to a neighbouring one, or a customer leaving the system, or a new customer 
arriving to the first queue). The corresponding \emph{conductances}
are then defined in a natural way: for an
(un-oriented) edge $\eps=(u,u')$,
we have $c(\eps)=\nu_\alpha(u)\lambda(u,u')$, where
$\lambda(u,u')\in\{a_k,b_k, k\geq 1\}$ is the rate of the 
corresponding transition. Note that $1 \leq \lambda (\eps) \leq \re$ for all edges~$\eps$.
Also, it is natural to regard the empty configuration 
as the ``origin'' of~$\bbDB$. 

We call a set of edges a \emph{cut-set}
if every infinite self-avoiding path starting at the origin has 
to pass through that set. We intend to use the result
of Nash-Williams~\cite{NW59}
for proving the recurrence: it says that if there is a sequence
of non-intersecting cut-sets $(\Pi_n)_{n \in \N}$ such
that 
\begin{equation}
\label{Nash-Williams}
 \sum_{n\in \N} \Big(\sum_{\eps\in \Pi_n}c(\eps)\Big)^{-1} = \infty,
\end{equation}
then the Markov chain is recurrent.

Let us define the \emph{weight}  
of a configuration~$u \in \bbD$
as $\cW(u) = \sum_{k\in \N}k u_k$ (i.e., a customer
in the first queue weighs one unit, a customer in the second queue
weighs two units, and so on). An important observation is that 
every transition changes (increases or decreases) the weight by
exactly one unit, so that the edge set of the graph 
is $\big\{(u,u'): u,u' \in \bbDB, \, |\cW(u)-\cW(u')|=1\big\}$.
For $n\in \ZP$, let us denote
\[
 \Delta_n := \big\{u\in\bbDB : \cW(u)=n\big\}.
\]
It is important to observe that the cardinality of~$\Delta_n$
is the so-called \emph{partition function} of~$n$
(i.e., the number of possible partitions of~$n$
into a sum of positive integer terms);
indeed, a customer at the $k$th queue plays the role
of a term~$k$ in the partition of~$n$. 
A lot is known about the asymptotic behaviour of the partition
function; we, however, will only need the following
fact (see e.g.~\cite{HarRam18}): there is $\gamma_1 \in \RP$
such that 
\begin{equation}
\label{HarRam_estimate}
 |\Delta_n| \leq \exp\big(\gamma_1 n^{1/2}\big) ,
 \text{ for all }n\in \N.
\end{equation}
Then, define the sequence of cut-sets
\[
 \Pi_n = \big\{(u,u'): u\in \Delta_n, u'\in \Delta_{n+1}\big\}.
\]
We intend to prove that, for some $\gamma_2>0$ 
and all~$j \geq 1$
(in the following, note that~$r_j$ and~$w_j$ 
are asymptotically equivalent 
in the sense that $r_j/w_j \to 1$ as $j\to\infty$)
\begin{equation}
\label{w1/6diverges}
 \sum_{\eps \in \Pi_{r_j}}c(\eps) \leq \exp\big(-\gamma_2 w_{j}^{1/6}\big) .
\end{equation}
This will already be enough to prove the recurrence, as it would
show that the series in~\eqref{Nash-Williams} contains
a sub-series with terms not converging to zero (even unbounded).
Since a configuration $u\in \Delta_n$ has at most~$O(\sqrt{n})$
non-empty queues (so at most~$O(\sqrt{n})$ edges connected to it) each with $c(\eps) \leq \re \nu_\alpha (u)$,
it holds that 
\begin{equation}
\label{estimate_wrj1}
 \sum_{\eps\in \Pi_{r_j}}c(\eps) 
  \leq \gamma_3 \sum_{u \in \Delta_{r_j}}\nu_\alpha(u)\sqrt{w_{j}}.
\end{equation}
Also, note from~\eqref{eq:df_measure_rho} that $\nu_\alpha (u) \leq \alpha_k^{u_k}$ for every $k \in \N$, and since $\alpha_k \leq \re^{-1}$, $\nu_\alpha (u) \leq \re^{-u_k}$ for every $k \in \N$.
Define $F_j := \{ u \in \Delta_{r_j} : \max_{k \in \N} u_k \geq w_j^{2/3}\}$, and observe that if $u \in F_j$,
 then 
$\nu_\alpha(u)\leq \re^{-w_j^{2/3}}$. Suppose instead that $u \in \Delta_{r_j} \setminus F_j$, so that   $u_k < w_j^{2/3}$
for all~$k \in \N$. Notice that, for $k\geq h_{j-1}$, we have
$\log \alpha_k = -(k+1)+h_{j-1}$;
 one can easily obtain that (at least for large enough~$j$)
 $k+1-h_{j-1}\geq k/2$
for $k\geq 3w_j^{1/7}$. Also, since 
\[
 \sum_{k=1}^{3w_j^{1/7}} k w_j^{2/3} 
  = O\big(w_j^{\frac{2}{7}+\frac{2}{3}}\big)
  = O\big(w_j^{\frac{20}{21}}\big),
\]
we have (for large enough~$j$ and for~$u$ such that
$u_k < w_j^{2/3}$ for all~$k$) 
\[
 \sum_{k\geq 3w_j^{1/7}}k u_k \geq \frac{r_j}{2}.
\]
So, with the above to hand, we have for $u \in \Delta_{r_j} \setminus F_j$, 
\[
 \nu_\alpha(u) 
\leq \exp\Big(\sum_{k\geq 1} u_k \log\alpha_k\Big) 
  \leq \exp\Big(-\sum_{k\geq 3w_j^{1/7}}\frac{k u_k}{2}\Big) 
  \leq \exp\Big(-\frac{r_j}{4}\Big) .
\]
Combined with the case of $u \in F_j$, we conclude that,
for all $u \in \Delta_{r_j}$, for all $j$ large enough,  
$\nu_\alpha(u)\leq \re^{-\gamma_4 w_j^{2/3}}$.
Then, \eqref{estimate_wrj1} and~\eqref{HarRam_estimate}
imply~\eqref{w1/6diverges}, so (as we argued before) 
the  Markov chain $\eta$ (and hence $X_1$)
is recurrent.
\end{proof}

\section{Stationary initial configurations: dynamic recurrence}
\label{sec:stationarity-start}

In this section, we 
consider the finer dynamics of the leftmost particle started from a stationary measure (assuming there is one, of which there might be several).
We know (Proposition~1.6 of~\cite{MPW25}) that if $\eta(0)$
is started from the stationary measure $\nu_\rho$ defined by~\eqref{eq:df_measure_rho} for an admissible $\rho = \rho(v)$, then $X_1(t) / t \to v$ as $t \to \infty$. The next result shows a sort of \emph{dynamic recurrence},
meaning that the particle oscillates each side of the strong-law behaviour.

\begin{theorem}
\label{thm:rec_stationary}
Suppose that Condition~\eqref{ass:rates} holds and
that $\rho = \rho (v)$ is admissible for a given $v \geq v_0$. Take $\eta (0) \sim \nu_\rho$ and $X_1(0)=0$.
Then 
\begin{equation}
\label{eq:rec_stat_crossings}
 \IP\big[\text{the set }
\{ t \geq 0 : X_1(t)=vt\} \text{ is unbounded}\big]=1.
\end{equation}
\end{theorem}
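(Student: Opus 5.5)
The plan is to reduce \eqref{eq:rec_stat_crossings} to an oscillation statement for the centred process $D(t) := X_1(t) - vt$, and then to upgrade oscillation to exact hitting of the level~$0$ by a conditional Borel--Cantelli argument.

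\emph{Drift and zero mean.} The leftmost particle jumps left at rate $a_1$ regardless of the configuration. It jumps right at rate $b_1$ exactly when $\eta_1>0$. Under $\nu_\rho$ we have $\IP[\eta_1(t)>0]=\rho_1$ for all $t$. Evaluating \eqref{eq:stable-traffic} against $\rho_0=1$ and $\rho=\alpha+v\beta$ (so $\rho_1 = \alpha_1 + v/b_1$) gives $-a_1+b_1\rho_1 = v$. Hence
\[
D(t) = M(t) + \int_0^t b_1\bigl(\1{\eta_1(s)>0}-\rho_1\bigr)\,\ud s ,
\]
where $M$ is a martingale with bounded jumps. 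The sampled sequence $D_n := D(n)$ therefore has stationary increments, because $\eta$ is stationary under $\IP_{\nu_\rho}$. These increments have exponential moments and zero mean.

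\emph{Oscillation.} I will show that a.s.\ $\limsup_{t\to\infty} D(t)=+\infty$ and $\liminf_{t\to\infty} D(t)=-\infty$. I would pass to the ergodic decomposition of the stationary increment sequence; if $\eta$ is ergodic under $\nu_\rho$, for instance via the mixing implicit in the second-class-customer couplings of Lemma~\ref{lem:queue-monotonicity}, this decomposition is trivial. On each ergodic component the mean is still zero, because the drift identity above holds pathwise in the ergodic-theorem sense. Kesten's theorem says that sums of stationary sequences cannot tend to $+\infty$ sublinearly, and Atkinson's recurrence theorem covers the zero-mean case. Together they exclude $D_n\to\pm\infty$. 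The events $\{\limsup D_n<\infty\}$ and $\{\liminf D_n>-\infty\}$ are shift-invariant. The standard dichotomy then says that either $D_n$ oscillates unboundedly in both directions, or $\sup_n|D_n-D_0|<\infty$, which makes $D$ a bounded coboundary. The main obstacle is to rule out the coboundary case. My plan is to use the unblocked Poisson clock of left jumps of $X_1$. Conditional on all other clocks in the Harris construction, the number of left-jump marks in $[0,n]$ is Poisson with mean $a_1 n$. Extra left-jump marks can only be partially compensated by right jumps, since each right jump requires $\eta_1>0$ and is driven by an independent clock. A variance or anti-concentration lower bound should therefore show that $D_n-D_0$ is not tight. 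If this step turns out to be delicate, the fallback is to prove that $\mathrm{Var}(D_n)\to\infty$ directly, via a lower bound on the conditional variance given the $\eta$-path.

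\emph{From oscillation to hitting.} Consider first $v>0$. Then $D$ decreases continuously between jumps and moves by $\pm1$ at jumps. Any passage from $\{D\geq1\}$ to $\{D\leq -1\}$ must enter the window $[0,1)$, and by oscillation this happens at an infinite sequence of stopping times $\sigma_1<\sigma_2<\cdots$, which can be taken separated by at least one unit of time. At each $\sigma_i$, by the strong Markov property of $(X_1,\eta)$, the event that no clock affecting $X_1$ rings during $[\sigma_i,\sigma_i+1/v]$ has conditional probability at least $\re^{-2B/v}$, uniformly in the configuration. On that event $D$ hits $0$ continuously. Lévy's extension of the Borel--Cantelli lemma then gives infinitely many hits a.s.

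The case $v<0$ is symmetric: $D$ increases continuously between jumps, and one uses the window $(-1,0]$ on upward passages. When $v=0$, $D=X_1$ is integer-valued and nearest-neighbour, so oscillation alone forces it to visit $0$ infinitely often.
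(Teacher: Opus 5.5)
\textbf{There is a genuine gap in your oscillation step.} You claim that a.s.\ $\limsup_{t\to\infty}D(t)=+\infty$ and $\liminf_{t\to\infty}D(t)=-\infty$. This is false in some cases the theorem covers, so no anti-concentration argument can deliver it.

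Take $v=v_0=0$ and $\rho=\alpha$ with $\sum_k\alpha_k<\infty$, for instance the rates of Theorem~\ref{thm:lamperti} with $\mu>1/4$. Then $\nu_\alpha(\bbDB)=1$, and \eqref{eq:queue-leftmost-finite} gives $D(t)=X_1(t)=\|\eta(0)\|-\|\eta(t)\|\le\|\eta(0)\|$ for all $t$. So $\limsup_t D(t)<\infty$ a.s. At the same time $\liminf_t D(t)=-\infty$, because the positive recurrent chain $\eta$ on $\bbDB$ visits configurations with arbitrarily many customers.

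This is exactly the case your ``standard dichotomy'' omits. A coboundary $g(\eta(0))-g(\eta(n))$ whose transfer function $g=\|\cdot\|$ is a.s.\ finite but unbounded gives one-sided boundedness without $\sup_n|D_n-D_0|<\infty$. Moreover, $D(n)$ is tight here and $\mathrm{Var}(D(n))\le 4\,\mathrm{Var}(\|\eta(0)\|)<\infty$. So both your anti-concentration claim and your variance fallback are false. The heuristic that extra left jumps are only partially compensated fails because customers who enter do eventually leave.

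Your $v=0$ conclusion and your $v>0$ downcrossing argument both rest on two-sided oscillation, so the proof does not go through. The theorem is still true in this example, but for a different reason: $\eta$ returns to $\eta(0)$ infinitely often, and at those times $X_1=0$.

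\textbf{What is actually needed, and how the paper gets it.} You only need recurrence of $D$ to a bounded window after arbitrarily large times. For $v\neq0$ the paper proves this and then upgrades it to an exact hit by a local argument (no jumps, or two unblocked left jumps followed by no jumps) combined with L\'evy's Borel--Cantelli lemma. For $v=0$ it uses integer-valued analogues of the same events.

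Concretely, with $Y=-D$, the paper shows that $E^{(1)}(t)=\{Y(t+s)\ge Y(t)+1 \text{ for all } s\ge n_0\}$ and its mirror $E^{(2)}(t)$ have probability zero. If $\IP[E^{(1)}(0)]>0$, ergodicity of the stationary sequence $(\2{E^{(1)}(k)})_{k\ge0}$ gives a positive-density set of times, at least $n_0$ apart, at each of which $Y$ gains at least one unit. This contradicts $Y(t)/t\to0$.

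Two ingredients used there are missing from your sketch:
\begin{itemize}
\item The pathwise law $Y(t)/t\to0$ comes from the Burke-type fact that the right-jump counting process $N_2$ is Poisson of rate $a_1+v$ under $\IP_{\nu_\rho}$. Your drift identity is an expectation under $\nu_\rho$, and it does not by itself give zero mean on each ergodic component.
\item Ergodicity comes from a priority-ordered construction of the customers together with Kolmogorov's $0$--$1$ law. Lemma~\ref{lem:queue-monotonicity} is only a monotone coupling and says nothing about mixing.
\end{itemize}
Atkinson's theorem, which you already invoke, would give window recurrence directly once these two ingredients are in place; for $v=0$ it gives $X_1(n)=0$ infinitely often. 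So the fix is to drop the oscillation goal, not to add a variance bound.
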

\begin{remark}
\label{rem:importance-of-initial-condition}
As mentioned in Remarks~\ref{rems:lamperti}\ref{rems:lamperti-v},
there are cases where $v_0 =0$ in which 
we have transience for $X_1$ started from $\eta(0) \in \bbXB$,
but for which Theorem~\ref{thm:rec_stationary} shows recurrence started from $\eta(0) \sim \nu_\rho$ corresponding to the minimal $\rho = \rho (v_0 ) = \alpha$. Intuitively, in such situations $\nu_\rho$ (which is, necessarily in that case, supported on configurations of infinitely many empty sites to the right of the leftmost particle) leaves enough space to reduce the leftwards pressure on the leftmost particle.
\end{remark}

\begin{proof}[Proof
of Theorem~\ref{thm:rec_stationary}]
Suppose that $\rho = \rho(v)$ is admissible, and take $\eta(0) \sim \nu_\rho$. 
For technical reasons, we treat the cases $v\neq 0$
and $v=0$ separately. First, assume that $v\neq 0$.
Then, it is enough to prove that
\begin{equation}
\label{eq:rec_stat}
 \IP\big[\text{for any }t_0>0 \text{ there exists }
 t'>t_0 \text{ such that }|X_1(t')-vt'|\leq 1\big]=1.
\end{equation}
Indeed, let us show that~\eqref{eq:rec_stat}
implies~\eqref{eq:rec_stat_crossings}.
Indeed, assume first that $v<0$.
If we have $X_1(t')\in [vt'-1, vt')$,
then with probability bounded away from zero
(and independently from the past)
the leftmost particle will not move in the time 
interval $[t',t'+|v|^{-1}]$, 
mean that, by time $t'' = t'+|v|^{-1}$ we will have
$X_1(t'') = X_1 (t') \in [ vt'' , vt''+1)$, and hence, by continuity of $t \mapsto X_1 (t) - vt$ for $t \in [t',t'']$, 
$X_1(t) = vt$ for some $t \in [t',t'']$.
On the other hand, suppose 
$X_1(t')\in (vt', vt'+1]$. Then with probability bounded away from zero
the leftmost particle will make exactly two jumps, both to the left, during time interval $(t', t'+|v|^{-1})$; then at time $t'' = t'+ |v|^{-1}$ we have
$X_1(t'') = X_1 (t') - 2 \in (vt'' -1 , vt'']$, and so either $X_1(t'') = vt''$ or else we can repeat the first argument.
In other words, on $\{ | X_1 (t') - vt ' | \leq 1 \}$, we have $\IP [ X_1 (t) = v t \text{ for some } t \in [t',t'+2 |v|^{-1}] \mid \cF_{t'} ] \geq \eps$ for some $\eps >0$ depending only on~$a_1$ and~$v$. Hence L\'evy's conditional Borel--Cantelli lemma shows that~\eqref{eq:rec_stat} implies~\eqref{eq:rec_stat_crossings}.

To prove~\eqref{eq:rec_stat}, it is clearly enough
to prove the following: for arbitrary~$n_0\in\N$,
we have
\begin{equation}
\label{eq:hit_0_after_n0}
 \IP\big[\text{there exists }
 t'\geq n_0 \text{ such that }|X_1(t')-vt'|\leq 1\big]=1.
\end{equation}
For the rest of the proof, let~$n_0$ be fixed.

 Let $N_1(t)$ (respectively, $N_2(t)$)
 be the number of jumps of the leftmost particle 
to the left (respectively, to the right) up to time~$t$.
Clearly, $N_1$ is a Poisson process of rate~$a_1$.
As for~$N_2$, due to~Proposition~1.6 of~\cite{MPW25}
(as noted in the proof there,
the original exit flow is 
the input flow of the reverse process; 
this is analogous to Burke's theorem~\cite{burke}),
we have that, in stationarity, $N_2$ is a Poisson process of rate~$a_1+v$.
For $t\geq 0$   define $Y(t) := N_1 (t) - N_2(t) + vt$, 
and note that (since $X_1 (0) = 0$) we have $| X_1 (t) - vt | = |Y(t)|$. 
The Poisson processes $N_1$ and $N_2$ are not, generally, independent, but nevertheless applying
the strong law of large numbers for the two processes shows that $\lim_{t \to \infty} (Y(t)/t) = 0$, a.s.

Consider the events (illustrated in Figure~\ref{f_remain_strip})
\begin{align*}
    E^{(1)}(t) := \big\{ Y(t+s) \geq Y(t) +1 
  \text{ for all }s\geq n_0\big\},
  \\ 
      E^{(2)}(t) := \big\{ Y(t+s) \leq Y(t) - 1
  \text{ for all }s\geq n_0\big\}.
\end{align*}
\begin{figure}
\begin{center}
\includegraphics[width=0.75\textwidth]{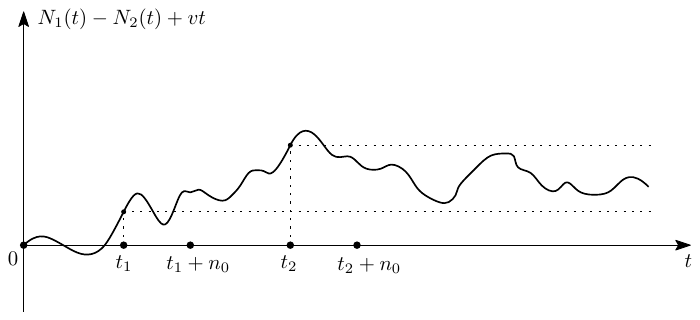}
\caption{The events $E^{(1)}(t_1)$ and $E^{(2)}(t_2)$
(with $t_1<t_2$).}
\label{f_remain_strip}
\end{center}
\end{figure}
Let $\psi_i :=\IP[E^{(i)}(0)]$, and note
that $\IP [ E^{(i)} (t) ] = \psi_i$ does not depend on $t \geq 0$.
We claim that $\psi_1=\psi_2=0$, from which we
immediately obtain~\eqref{eq:hit_0_after_n0}; we will next justify the claim.

To verify the claim that $\psi_1 = 0$, we suppose, for a contradiction, that $\psi_1 > 0$.
Abbreviate $Z_k:=\2 {E^{(1)} (k)}$, and consider the sequence 
$Z=(Z_0,Z_1,Z_2,\ldots)$;
we will show that  the sequence~$Z$ 
is ergodic (i.e., $\lim_{n \to \infty} n^{-1} \sum_{k=1}^n 
Z_k \to \psi_1$, a.s.).

First,  since we have stationary initial condition $\eta (0) \sim \nu_\rho$,
 the sequence~$Z$ 
 is stationary, meaning that, for each $n \in \N$, 
$Z$ 
and~$\theta^n Z$ 
have the same distribution, where~$\theta$ is the unit time-shift operator, i.e., 
$\theta Z := (Z_1,Z_2,Z_3,\ldots)$.
Define the $\sigma$-algebra of invariant events by $\cI:=\{A\in\sigma(Z_0,Z_1,\ldots):\theta^{-1}A=A\}$.
By Birkhoff's ergodic theorem (see e.g.~\cite[p.~561]{kallenberg}), to show that the sequence~$Z$
is ergodic 
it suffices to show that $\cI$ is trivial, i.e., $\IP (A) \in \{0,1\}$
for every $A \in \cI$. 
 
Next observe that, 
 in the stationary regime, every queue becomes empty on an a.s.~unbounded set of times.  
 To see this, fix $K \in \N$ and consider a fixed queue. Each time the queue has~$K$ customers, there is a positive chance it will be empty after a fixed time interval (uniformly in time and in the number of customers in the other queues). Hence if the queue length is~$K$ on an unbounded set of times, a.s.~the queue length will be~$0$ on an unbounded set of times. On the other hand, if, for each~$K \in \N$, the set of times when the queue has~$K$ customers has a finite supremum, then the queue size tends to infinity. But if the latter has a positive probability, the expected size of a queue cannot remain constant, contradicting stationarity. 
 This verifies the claim that every queue is empty on an unbounded set of times. Hence every customer in the system will eventually be served at its current queue, and hence every customer in the system will complete every step of its associated realization of the customer walk eventually.

To argue triviality, we will identify a probabilistic structure that allows us to appeal to Kolmogorov's $0$--$1$ law, and to do so we use
 a different formal construction of the process than the Harris graphical construction which underpins~\cite{MPW25}. Instead, to each particle, attach the following attributes:
 \begin{itemize}
     \item the time it appears in the system (which is 0 for those initially present there); 
     \item the skeleton walk it does (which also contains the information about its initial position in case it was initially present in the system);
     \item  for customer $k$, the sequence of Poisson clocks $(t^{(k)}_{j,n}, j\in \N, n\in \N)$, with intensities depending on its skeleton walk, where $n$ is the number of the jump and $(t^{(k)}_{1,n}, t^{(k)}_{2,n}, \ldots )$ are the attempted jump times (the jumps is only executed if the customer is highest priority in its queue, according to the priority policy given below).
 \end{itemize}
Declare a customer to have priority $m \in \N$ if it was initially at the $m$th queue, or else arrived to the system during time interval $(m-1,m]$, and suppose that
the service policy is such that priority $m$ customers have priority of service over all customers of priority $\ell < m$.
Also choose some order (by any reasonable rule) among the customers of the same priority index. Then, the above works as a formal construction of the process, and it is also true that behaviour of a customer with priority $m$ is not affected
by any customers of priority $\ell < m$.

Now, we have two sequences of independent random elements:
\begin{itemize}
    \item  particles initially at $m$th queue (with all their attributes), for $m \in \N$, and
    \item particles which arrived to the system in $(m-1,m]$ (again, with all their attributes), for $m \in \N$.
\end{itemize}
Consider first the case of when the customer random walk $\zeta$ is recurrent. We argue that any invariant event $A \in \cI$ is also a tail event with respect to the above i.i.d.~sequence, equivalently, every $A \in \cI$ is independent of customers of any finite priority, and so has probability $0$ or $1$ by Kolmogorov's law. But, for any $m \in \N$ and any $\omega$ (i.e., the collection of attributes of all the customers) there is a time shift that  eliminates  all customers of priority $\ell < m$, and the (future and past) evolution of customers of priority at least $m$ does not depend on these. Hence customers of any finite priority cannot influence the occurrence of $A \in \cI$. This demonstrates that every $A \in \cI$ is a tail event, and then the Kolmogorov $0$--$1$ law shows that $\IP (A) \in \{0,1\}$. 

On the other hand, suppose the customer random walk $\zeta$ is transient.  Consider $\cI_N$, the class of invariant events that are
measurable with respect to $\sigma ( \eta_k (t) , t \geq 0, 1 \leq k \leq N)$. Then $\cI_N$ is  a $\pi$-system and $\sigma ( \cI_1, \cI_2, \ldots ) = \cI$, so, by Dynkin's $\pi$--$\lambda$ theorem
(note that all $0$--$1$ events form a $\lambda$-system),
to prove that~$\cI$ is trivial it is sufficient to prove that~$\cI_N$ is trivial for each $N \in \N$. Fix such~$N$. Now every customer of priority $1, \ldots, N$ will either leave the system in finite time (by exiting via the leftmost queue, as in the recurrent case) or else will eventually never return to a queue of index in $1, \ldots, N$ (by transience); in either case, these customers cannot influence events $A \in \cI_N$, and so, by a similar argument to before,
for every $N \in \N$ and every $A \in \cI_N$, $A$ is a tail event. By the $\pi$--$\lambda$ argument above,
this verifies that  $\IP (A) \in \{0,1\}$ for every $A \in \cI$. This completes the proof of ergodicity.

Having established that the sequence~$Z$ 
is ergodic,  the hypothesis that $\psi_1 >0$,
implies that (asymptotically) a proportion~$\psi_1$
of events $(E^{(1)}(0), E^{(1)}(1), E^{(1)}(2),\ldots)$
will occur; moreover, we can find a (random) sequence
$m_1<m_2<m_3<\cdots$ such that
$E^{(1)}(m_i)$ occur and $m_{i+1}-m_i\geq n_0$ 
for all~$i$. The density of that sequence will be still
positive, at least $\frac{\psi_1}{n_0}$.


Note that if $G^{(1)} (m_k)$ occurs, then, since $m_{k+1} - m_k \geq n_0$, 
$Y( m_{k+1} ) = Y ({m_k} + (m_{k+1} -m_k) ) \geq Y(m_k) +1$.
It follows that, 
for every $k \in \N$, using that $G(m_1), \ldots, G(m_k)$ occur, we get
$Y(m_k) \geq Y(m_1) +k$.
As already observed, if $\psi_1 >0$ then there exists $\eps >0$ such that
$\limsup_{k \to \infty} (k/m_k) \geq \eps$, a.s.
It follows that
\[ \limsup_{t \to \infty} \frac{Y(t)}{t} \geq \eps, \as .
\]
But the strong law of large numbers said that $Y(t)/t \to 0$, a.s., giving a contradiction.
Thus it must be that $\psi_1 = 0$.
A similar argument shows that $\psi_2 =0$, and thus verifies~\eqref{eq:hit_0_after_n0} in the case $v \neq 0$.


We now briefly comment on the case $v=0$.
In this case, it is not immediate to obtain that~\eqref{eq:rec_stat}
implies~\eqref{eq:rec_stat_crossings}.
On the other hand, since $X_1(t)-vt$ now only assumes
integer values, a  more direct  argument goes through;
namely, for $t\geq 0$ and $i \in \{ 1,2 \}$, define the events
\[
 E^{(i)}(t) = \{N_i(s+t)-N_i(t)> N_{3-i}(s+t)-N_{3-i}(t)
  \text{ for all }s\geq 
  n_0\},
\]
and proceed analogously. We omit the details.
\end{proof}

\section*{Acknowledgements}
\addcontentsline{toc}{section}{Acknowledgements}

The work of MM and AW was supported by EPSRC grant EP/W00657X/1.
SP was partially supported by
CMUP, member of LASI, which is financed by national funds
through FCT (Funda\c{c}\~ao
para a Ci\^encia e a Tecnologia, I.P.) 
under the project with reference UID/00144/2025,
\url{https://doi.org/10.54499/UID/00144/2025}.

\printbibliography

@article {andjel82,
    AUTHOR = {Andjel, E. D.},
     TITLE = {Invariant measures for the zero range processes},
   JOURNAL = {Ann. Probab.},
  FJOURNAL = {The Annals of Probability},
    VOLUME = {10},
      YEAR = {1982},
    NUMBER = {3},
     PAGES = {525--547},
      DOI = {10.1214/aop/1176993765},
}

@book {anderson,
    AUTHOR = {Anderson, W. J.},
     TITLE = {Continuous-time {M}arkov Chains},
    SERIES = {Springer Series in Statistics: Probability and its Applications},
 PUBLISHER = {Springer-Verlag},
     ADDRESS = {New York}, 
      YEAR = {1991},
     PAGES = {xii+355},
      ISBN = {0-387-97369-9},
       DOI = {10.1007/978-1-4612-3038-0},
}

@book {kallenberg,
    AUTHOR = {Kallenberg, Olav},
     TITLE = {Foundations of modern probability},
    SERIES = {Probability Theory and Stochastic Modelling},
    VOLUME = {99},
   EDITION = {Third},
 PUBLISHER = {Springer, Cham},
      YEAR = {[2021] \copyright 2021},
     PAGES = {xii+946},
      ISBN = {978-3-030-61871-1; 978-3-030-61870-4},
   MRCLASS = {60-01 (60A10 60G05)},
  MRNUMBER = {4226142},
MRREVIEWER = {Myron\ Hlynka},
       DOI = {10.1007/978-3-030-61871-1},
       URL = {https://doi.org/10.1007/978-3-030-61871-1},
}

@article {lamp1,
    AUTHOR = {Lamperti, J.},
     TITLE = {Criteria for the recurrence or transience of stochastic
              process. {I}},
   JOURNAL = {J. Math. Anal. Appl.},
  FJOURNAL = {Journal of Mathematical Analysis and Applications},
    VOLUME = {1},
      YEAR = {1960},
     PAGES = {314--330},
      ISSN = {0022-247X},
   MRCLASS = {60.40},
  MRNUMBER = {126872},
MRREVIEWER = {T.\ E.\ Harris},
       DOI = {10.1016/0022-247X(60)90005-6},
       URL = {https://doi.org/10.1016/0022-247X(60)90005-6},
}

@article {lamp2,
    AUTHOR = {Lamperti, J.},
     TITLE = {Criteria for stochastic processes. {II}. {P}assage-time
              moments},
   JOURNAL = {J. Math. Anal. Appl.},
  FJOURNAL = {Journal of Mathematical Analysis and Applications},
    VOLUME = {7},
      YEAR = {1963},
     PAGES = {127--145},
      ISSN = {0022-247X},
   MRCLASS = {60.60},
  MRNUMBER = {159361},
MRREVIEWER = {K.\ L.\ Chung},
       DOI = {10.1016/0022-247X(63)90083-0},
       URL = {https://doi.org/10.1016/0022-247X(63)90083-0},
}

@article {arratia83,
    AUTHOR = {Arratia, R.},
     TITLE = {The motion of a tagged particle in the simple symmetric
              exclusion system on {${\bf Z}$}},
   JOURNAL = {Ann. Probab.},
  FJOURNAL = {The Annals of Probability},
    VOLUME = {11},
      YEAR = {1983},
    NUMBER = {2},
     PAGES = {362--373},
     DOI = {10.1214/aop/1176993602},
}

@article {bmrs2017,
    AUTHOR = {Bahadoran, C. and Mountford, T. and Ravishankar, K. and Saada,
              E.},
     TITLE = {Supercritical behavior of asymmetric zero-range process with
              sitewise disorder},
   JOURNAL = {Ann. Inst. Henri Poincar\'e{} Probab. Stat.},
  FJOURNAL = {Annales de l'Institut Henri Poincar\'e{} Probabilit\'es et
              Statistiques},
    VOLUME = {53},
      YEAR = {2017},
    NUMBER = {2},
     PAGES = {766--801},
       DOI = {10.1214/15-AIHP736},
}

@article {burke,
    AUTHOR = {Burke, P. J.},
     TITLE = {The output of a queuing system},
   JOURNAL = {Operations Res.},
  FJOURNAL = {Operations Research},
    VOLUME = {4},
      YEAR = {1956},
     PAGES = {699--704 (1957)},
      not-ISSN = {0030-364X,1526-5463},
   MRCLASS = {90.0X},
  MRNUMBER = {83416},
MRREVIEWER = {E.\ Reich},
       DOI = {10.1287/opre.4.6.699},
       not-URL = {https://doi.org/10.1287/opre.4.6.699},
}

@article {HarRam18,
    AUTHOR = {Hardy, G. H. and Ramanujan, S.},
     TITLE = {Asymptotic formul\ae\ in combinatory analysis},
   JOURNAL = {Proc. London Math. Soc. (2)},
  FJOURNAL = {Proceedings of the London Mathematical Society. Second Series},
    VOLUME = {17},
      YEAR = {1918},
     PAGES = {75--115},
      not-ISSN = {0024-6115},
   MRCLASS = {99-04},
  MRNUMBER = {1575586},
       DOI = {10.1112/plms/s2-17.1.75},
       not-URL = {https://doi.org/10.1112/plms/s2-17.1.75},
}

@article {NW59,
    AUTHOR = {Nash-Williams, C. {\relax St.}\ J. A.},
     TITLE = {Random walk and electric currents in networks},
   JOURNAL = {Proc. Cambridge Philos. Soc.},
  FJOURNAL = {Proceedings of the Cambridge Philosophical Society},
    VOLUME = {55},
      YEAR = {1959},
     PAGES = {181--194},
      not-ISSN = {0008-1981},
   MRCLASS = {60.65},
  MRNUMBER = {124932},
MRREVIEWER = {D.\ G.\ Kendall},
       DOI = {10.1017/s0305004100033879},
       not-URL = {https://doi.org/10.1017/s0305004100033879},
}

@article {P25,
    AUTHOR = {Popov, S.},
     TITLE = {On transience of {${\rm M}/{\rm G}/\infty$} queues},
   JOURNAL = {J. Appl. Probab.},
  FJOURNAL = {Journal of Applied Probability},
    VOLUME = {62},
      YEAR = {2025},
    NUMBER = {2},
     PAGES = {572--575},
      not-ISSN = {0021-9002,1475-6072},
   MRCLASS = {60K25 (60G55)},
  MRNUMBER = {4899548},
       DOI = {10.1017/jpr.2024.78},
       not-URL = {https://doi.org/10.1017/jpr.2024.78},
}

@article {FGP10,
    AUTHOR = {Fribergh, A. and Gantert, N. and Popov, S.},
     TITLE = {On slowdown and speedup of transient random walks in random
              environment},
   JOURNAL = {Probab. Theory Related Fields},
  FJOURNAL = {Probability Theory and Related Fields},
    VOLUME = {147},
      YEAR = {2010},
    NUMBER = {1-2},
     PAGES = {43--88},
      not-ISSN = {0178-8051,1432-2064},
   MRCLASS = {60K37 (60F10 60G50)},
  MRNUMBER = {2594347},
MRREVIEWER = {Marcel\ Ortgiese},
       DOI = {10.1007/s00440-009-0201-2},
       not-URL = {https://doi.org/10.1007/s00440-009-0201-2},
}

@article {bfl,
    AUTHOR = {Benjamini, I. and Ferrari, P. A. and Landim, C.},
     TITLE = {Asymmetric conservative processes with random rates},
   JOURNAL = {Stochastic Process. Appl.},
  FJOURNAL = {Stochastic Processes and their Applications},
    VOLUME = {61},
      YEAR = {1996},
    NUMBER = {2},
     PAGES = {181--204},
       DOI = {10.1016/0304-4149(95)00077-1},
}

@incollection {ferrari92,
    AUTHOR = {Ferrari, P. A.},
     TITLE = {Shocks in the {B}urgers equation and the asymmetric simple exclusion process},
 BOOKTITLE = {Statistical Physics, Automata Networks and Dynamical Systems ({S}antiago, 1990)},
    SERIES = {Math. Appl.},
     PAGES = {25--64},
 PUBLISHER = {Kluwer Acad. Publ.}, 
   ADDRESS = {Dordrecht},
      YEAR = {1992},
      ISBN = {0-7923-1595-2},
       DOI = {10.1007/978-94-011-2578-9_2},
}

@article {FF94,
    AUTHOR = {Ferrari, P. A. and Fontes, L. R. G.},
     TITLE = {The net output process of a system with infinitely many queues},
   JOURNAL = {Ann. Appl. Probab.},
  FJOURNAL = {The Annals of Applied Probability},
    VOLUME = {4},
      YEAR = {1994},
    NUMBER = {4},
     PAGES = {1129--1144},
      DOI = {10.1214/aoap/1177004907},
}

@article {kipnis,
    AUTHOR = {Kipnis, C.},
     TITLE = {Central limit theorems for infinite series of queues and applications to simple exclusion},
   JOURNAL = {Ann. Probab.},
  FJOURNAL = {The Annals of Probability},
    VOLUME = {14},
      YEAR = {1986},
    NUMBER = {2},
     PAGES = {397--408},
}

@book {LawLim10,
    AUTHOR = {Lawler, G. F. and Limic, V.},
     TITLE = {Random walk: a modern introduction},
    SERIES = {Cambridge Studies in Advanced Mathematics},
    VOLUME = {123},
 PUBLISHER = {Cambridge University Press, Cambridge},
      YEAR = {2010},
     PAGES = {xii+364},
      ISBN = {978-0-521-51918-2},
   MRCLASS = {60G50 (60-02)},
  MRNUMBER = {2677157 (2012a:60132)},
MRREVIEWER = {Andrew R. Wade},
}

@article {mmpw,
    AUTHOR = {Malyshev, V. and Menshikov, M. and Popov, S. and Wade, A.},
     TITLE = {Dynamics of finite inhomogeneous particle systems with exclusion interaction},
   JOURNAL = {J. Stat. Phys.},
  FJOURNAL = {Journal of Statistical Physics},
    VOLUME = {190},
      YEAR = {2023},
    NUMBER = {11},
     PAGES = {Paper No. 184, 32},
       DOI = {10.1007/s10955-023-03190-8},
}

@article {MPW25,
    AUTHOR = {Menshikov, M. and Popov, S. and Wade, A.},
     TITLE = {Semi-infinite particle systems with exclusion interaction and heterogeneous jump rates},
   JOURNAL = {Probab. Theory Relat. Fields},
      YEAR = {2025},
}

@book {mpw-book,
    AUTHOR = {Menshikov, M. and Popov, S. and Wade, A.},
     TITLE = {Non-homogeneous Random Walks},
    SERIES = {Cambridge Tracts in Mathematics},
    VOLUME = {209},
 PUBLISHER = {Cambridge University Press}, 
  ADDRESS = {Cambridge},
      YEAR = {2017},
     PAGES = {xviii+363},
      ISBN = {978-1-107-02669-8},
       DOI = {10.1017/9781139208468},
}

\end{document}

